\numberwithin{equation}{section}
\theoremstyle{plain}
\newtheorem{theorem}{Theorem}
\newtheorem{proposition}{Proposition}
\newtheorem{remark}{Remark}
\newtheorem{lemma}{Lemma}
\newtheorem{thm}{Theorem}
\newtheorem{pro}[thm]{Proposition}
\theoremstyle{definition}
\newtheorem{definition}{Definition}
\newtheorem{example}{Example}
\newcommand{\Q}{\mathbb{Q}}
\newcommand{\R}{\mathbb{R}}
\newcommand{\Z}{\mathbb{Z}}
\newcommand{\N}{\mathbb{N}}
\newcommand{\C}{\mathbb{C}}
\newcommand{\id}{\textnormal{id}}
\DeclareMathOperator{\Scal}{scal}
\DeclareMathOperator{\Def}{def}
\DeclareMathOperator{\Min}{min}
\DeclareMathOperator{\Spin}{Spin}
\begin{document}

\author{Agnese Mantione and Rafael Torres}

\title[Positive scalar curvature and homotopy types of 4-manifolds]{Geography of 4-manifolds with positive scalar curvature}


\address{Mathematisches Institut der Westf\"alischen Wilhelms-Universit\"at M\"unchen\\Einsteinstr. 62\\DE-48149\\M\"unster\\Germany}

\email{amantione@uni-muenster.de}

\address{Scuola Internazionale Superiori di Studi Avanzati (SISSA)\\ Via Bonomea 265\\ 34136\\ Trieste\\Italy}

\email{rtorres@sissa.it}

\subjclass[2020]{57M50, 53C21, 57R19}

\maketitle

\emph{Abstract}: We discuss the geography problem of closed oriented 4-manifolds that admit a Riemannian metric of positive scalar curvature, and use it to survey  mathematical work employed to address Gromov's observation that  manifolds with positive scalar curvature tend to be inessential by focusing on the four-dimensional case. We also point out an strengthening of a result of Carr and its extension to the non-orientable realm.

\section{Introduction}

The geography problem of 4-manifolds with positive scalar curvature for a group $G$ consists of determining the sets\begin{equation}\label{Geography PSC}\mathcal{G}^{\dagger}_{> 0}(G) = \{(\chi(M), \sigma(M)): M\in \mathcal{M}^{\dagger}_{> 0}(G)\},\end{equation}where $\chi(M)$ is the Euler characteristic, $\sigma(M)$ is the signature, and $\mathcal{M}^{\dagger}_{>0}(G)$ is the set of closed smooth oriented 4-manifolds $M$ whose fundamental group is isomorphic to $G$, with $\dagger$ specifying its $w_2$-type, and that admit a Riemannian metric $(M, g)$ of strictly positive scalar curvature $\Scal_g(p) > 0$ for every point $p\in M$. An orientable 4-manifold $M$ has $w_2$-type (I) and $\dagger = 1$ if $w_2(\widetilde{M})\neq 0$, $w_2$-type (II) and $\dagger = 2$  if $w_2(M) = 0$, and $w_2$-type (III) and $\dagger = 3$ if $w_2(M)\neq 0$ and $w_2(\widetilde{M}) = 0$, where $\widetilde{M}$ is the universal cover of $M$ \cite{[HambletonKreck]}. There is a division of the set of closed smooth oriented 4-manifolds\begin{equation}\mathcal{M}_{>0}(G) = \mathcal{M}^{1}_{>0}(G) \cup \mathcal{M}^{2}_{>0}(G)\cup \mathcal{M}^{3}_{>0}(G),\end{equation} and the aforementioned geography problem becomes the computation of the set\begin{equation}\mathcal{G}_{> 0}(G) = \mathcal{G}^{1}_{> 0}(G) \cup \mathcal{G}^{2}_{> 0}(G) \cup \mathcal{G}^{3}_{> 0}(G).\end{equation}
Given an element $\alpha\in H_4(G) = H_4(BG; \Q)$, let $\mathcal{M}^{\dagger}_{> 0}(G, \alpha)$ be the set of pairs $(M, f)$ for $M\in \mathcal{M}^{\dagger}_{> 0}(G)$ with reference map $f: M\rightarrow BG$ into the corresponding Eilenberg-MacLane space inducing an isomorphism $f_{\ast}:\pi_1(M)\rightarrow G$  and such that $f_{\ast}([M]) = \alpha$ for $\dagger = 1, 2, 3$. Define\begin{equation}\label{Geography Alpha 1}\mathcal{G}^{\dagger}_{> 0}(G, \alpha):= \{(\chi(M), \sigma(M)) : (M, f)\in \mathcal{M}^{\dagger}_{> 0}(G, \alpha)\};\end{equation}see \cite[Section 2]{[KirkLivingston2]}. The set (\ref{Geography PSC}) consists of the union of the sets (\ref{Geography Alpha 1}) for $\alpha\in H_4(G)$.

We consider some refinements adapted to our purposes of the classic Hausmann-Weinberger invariant \cite[\S 1]{[HausmannWeinberger]}\begin{equation}\label{HW Invariant}q_{> 0}^{\dagger}(G):= \Min \{\chi(M): M\in \mathcal{M}^{\dagger}_{> 0}(G)\}\end{equation}and\begin{equation}\label{HW Invariant}q_{> 0}(G):= \Min \{q_{> 0}^{\dagger}(G) : \dagger = 1,2,3\}\end{equation}as done by Kirk-Livingston \cite[Definition 3.3]{[KirkLivingston2]}. We define\begin{equation}\label{Function1}q_{> 0, \alpha}^{\dagger}(G, \sigma):= \Min \{\chi(M): (M, f)\in \mathcal{M}^{\dagger}_{> 0}(G, \alpha) \; \text{and} \space \; \sigma(M) = \sigma\}\end{equation} and\begin{equation} q_{> 0, \alpha}(G, \sigma):= \Min \{q_{> 0, \alpha}^{\dagger}(G, \sigma): \dagger = 1, 2, 3\}\end{equation} by taking into account the signature and the reference map as done by Kotschick \cite{[Kotschick]} and Kirk-Livingston \cite[Definition 3.3]{[KirkLivingston2]}.


In particular,\begin{equation}\label{Relation}q_{> 0}^{\dagger}(G) = \Min \{q_{> 0, \alpha}^{\dagger}(G, \sigma) : \sigma \in \Z, \alpha\in H_4(G)\},\end{equation}and\begin{equation}\label{Relation2}q_{> 0}(G) = \Min \{q_{> 0, \alpha}^{\dagger}(G, \sigma) : \sigma \in \Z, \alpha\in H_4(G), \dagger = 1,2,3\},\end{equation}and the invariant $q_{> 0, \alpha}^{\dagger}(G, \sigma)$ determines $\mathcal{G}_{> 0}^{\dagger}(G, \alpha)$ and viceversa for a given $w_2$-type $\dagger = 1, 2, 3$ \cite[Theorem 3.4 Item (5)]{[KirkLivingston2]}. 

A closed orientable $n$-manifold $M$ with fundamental group $G = \pi_1(M)$ is said to be an essential manifold if $f_\ast([M])\neq 0\in H_n(G)$. An observation of Gromov is that manifolds that support a Riemannian metric of positive scalar curvature tend to be inessential. In this note, we address Gromov's observation for 4-manifolds in terms of the geography problem described before. We do so without involving Seiberg-Witten theory \cite[\S 2.4]{[GompfStipsicz]}. The contribution (albeit modest) of this survey is to describe how work of Carr \cite{[Carr]}, Cecchini-Schick \cite{[CecchiniSchick]}, Chodosh-Li \cite{[ChodoshLi]}, Gromov \cite{[Gromov]}, Gromov-Lawson \cite{[GromovLawson1], [GromovLawson2], [GromovLawson3]}, Kirk-Livingston \cite{[KirkLivingston2]}, Rosenberg \cite{[Rosenberg1],[Rosenberg2]}, Schick \cite{[Schick]} and Schoen-Yau \cite{[SchoenYau]} turns the study of the geography problem of closed 4-manifolds with positive scalar curvature into a problem in group (co)homology and geometric group theory. We provide  a stronger version to Carr's result in Theorem \ref{Theorem Carr} and its extension to non-orientable manifolds in Theorem \ref{Theorem NonOrientable Carr}.

There are two canonical steps to study the geography problem, which are reviewed in Section \ref{Section NPositiveScalarCurvature} and Section \ref{Section Constructions}. The first step is the construction of manifolds: a fundamental result of Gromov-Lawson and Schoen-Yau (Theorem \ref{Theorem PSC}) is used to construct a myriad of examples of 4-manifolds whose Euler characteristic and signature realize the sets (\ref{Geography Alpha 1}) for $\alpha = 0\in H_4(G)$. In the same vein, we use a result due to Carr that shows that the fundamental group is not an obstruction to the existence of a metric of positive scalar curvature in Theorem \ref{Theorem Carr}. We state a stronger result than the one originally considered by Carr and compute some topological invariants of the manifolds. As a warm-up case of the first step, we now describe the geography and computation of the invariants that were introduced for the case of surface groups $G = \pi_1(\Sigma_g)$, where $\Sigma_g$ is a closed orientable surface of genus $g\in \N$.

\begin{example}\label{Geography Surface Groups} Geography problem of 4-manifolds with positive scalar curvature and fundamental group isomorphic to a non-trivial surface group. The Eilenberg-MacLane space for these groups is $B\pi_1(\Sigma_g) = \Sigma_g$ and\begin{equation}H_4(\pi_1(\Sigma_g); \Q) = H_4(B\pi_1(\Sigma_g); \Q) = H_4(\Sigma_g; \Q) = 0.\end{equation}Hence, $\alpha = 0$ is the only case that needs to be considered. The total space $S^2\widetilde{\times} \Sigma_g$ of the non-trivial 2-sphere bundle over $\Sigma_g$ and the product $S^2\times \Sigma_g$ admit Riemannian metrics of positive scalar curvature \cite[1.4 Observation]{[Stolz]}. The values of the modified Hausmann-Weinberger invariants are\begin{equation}\chi(S^2\times \Sigma_g) = q_{> 0}^{2}(\pi_1(\Sigma_g)) = 4 - 4g = q_{> 0}^{3}(\pi_1(\Sigma_g)) = \chi(S^2\widetilde{\times} \Sigma_g)\end{equation} and $q_{> 0}(\pi_1(\Sigma_g)) = 4 - 4g$ by a result of Kotschick \cite{[Kotschick]}; see \cite[Section 3]{[KirkLivingston2]}. An index theoretical obstruction due to Lichnerowicz and Schr\"odinger states that the signature of a closed orientable Riemannian 4-manifold $(M, g)$ with scalar curvature $\Scal_g > 0$ and second Stiefel-Whitney class $w_2(M) = 0$ must satisfy $\sigma(M) = 0$; see Theorem \ref{Theorem Lichnerowicz} and Section \ref{Section NPositiveScalarCurvature}. Therefore, the identitities\begin{equation}q_{> 0}^{2}(\pi_1(\Sigma_g), \sigma) = q_{> 0}^{2}(\pi_1(\Sigma_g), 0)  = q_{> 0}^{2}(\pi_1(\Sigma_g)) = q_{> 0}(\pi_1(\Sigma_g)) \end{equation} hold.
By taking a connected sum $M_{2k + 2}:= S^2\times \Sigma_g\#k(S^2\times S^2)$ with $k\in \N$ copies of the product of two round 2-spheres we obtain a closed simply connected 4-manifold with a Riemannian metric of positive scalar curvature, $w_2(M_{2k + 2}) = 0$, Euler characteristic $\chi(M_{2k + 2}) = 4 - 4g + 2k$, and signature $\sigma(M_{2k + 2}) = 0$. A fundamental result of Gromov-Lawson and Schoen-Yau states that the existence of a Riemannian metric with positive scalar curvature is a property that is closed under connected sums; see Theorem \ref{Theorem PSC}. We conclude that\begin{equation}\mathcal{G}^{2}_{> 0}(\pi_1(\Sigma_g)) = \{(4 - 4g + 2k, 0): k\in \Z_{\geq 0}\} =  \mathcal{G}^{2}_{> 0}(\pi_1(\Sigma_g), 0).\end{equation}

The scenario in the case of non-zero second Stiefel-Whitney class is addressed analogously by taking connected sums with the complex projective plane $\mathbb{CP}^2$ equipped with the Fubini-Study metric and the underlying 4-manifold taken with the opposite orientation $\overline{\mathbb{CP}^2}$. Gromov-Lawson's and Schoen-Yau's result implies that the connected sum $M_{a, b}: = S^2\widetilde{\times} \Sigma_g\# a(\mathbb{CP}^2)\# b(\overline{\mathbb{CP}^2})$ admits a Riemannian metric of positive scalar curvature. We have that\begin{equation}q_{> 0}(\pi_1(\Sigma_g), \sigma)  =  4 - 4g + |\sigma|\end{equation}and\begin{equation} \mathcal{G}_{> 0}(\pi_1(\Sigma_g)) =  \{(4 - 4g + |k|, k): k\in \Z\}.\end{equation}
\end{example}

The second step in the study of the geography problem is based on three fundamental obstruction techniques, and its goal is to show that the sets (\ref{Geography Alpha 1}) are empty for $\alpha \neq 0\in H_4(G)$. We now briefly mention these techniques focusing on dimension four, and we recall them in further detail in Section \ref{Section NPositiveScalarCurvature}. We start with an index theoretical technique that has arguably attracted the most attention of researchers through the years. Rosenberg \cite{[Rosenberg1]}, \cite[Section 1.1.1]{[Rosenberg2]} employed the $C^\ast$-algebraic index of the Dirac operator on a closed orientable manifold with zero Stiefel-Whitney class.  For an essential 4-manifold $M$ with second Stiefel-Whitney class $w_2(M) = 0$, his result imposes restrictions on the sets (\ref{Geography Alpha 1}) and (\ref{Geography PSC}); see Theorem \ref{Theorem Rosenberg}. These results yield the following theorem.

\begin{thm}\label{Theorem A} Let $G$ be a finitely presented group whose first and second Betti numbers satisfy\begin{equation}\label{Hypothesis 1}b_1(G) - b_2(G) = \Def(G),\end{equation}where $\Def(G)$ is its deficiency. Suppose that $G$ satisfies the strong Novikov conjecture.

The sets $\mathcal{G}^{2}_{> 0}(G, \alpha)$ are empty for every $\alpha \neq 0$, we have that\begin{equation}\label{Value Minimizer}\mathcal{G}_{> 0}^{2}(G)  = \{(2 - 2\Def(G) + 2|k|, 0): k\in \Z\} =  \mathcal{G}^{2}_{> 0}(G, 0),\end{equation} and the values of the modified Hausmann-Weinberger invariants are\begin{equation}\label{Value Minimizer1} q_{> 0}^2(G) = 2 - 2\Def(G) = q_{> 0, 0}^2(G, 0).\end{equation}
\end{thm}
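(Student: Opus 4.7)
My approach is to combine the two obstruction results quoted in the introduction (Rosenberg's Theorem~\ref{Theorem Rosenberg} and the Lichnerowicz--Schr\"odinger Theorem~\ref{Theorem Lichnerowicz}) with an elementary intersection-form argument that exploits the extremal hypothesis $b_1(G) - b_2(G) = \Def(G)$, and then realise the resulting bound via Theorem~\ref{Theorem Carr} and the connected-sum theorem of Gromov--Lawson (Theorem~\ref{Theorem PSC}).

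Let $(M, f) \in \mathcal{M}^2_{>0}(G, \alpha)$. The condition $w_2(M) = 0$ together with the strong Novikov conjecture for $G$ forces, by Theorem~\ref{Theorem Rosenberg}, the vanishing $\alpha = f_*[M] = 0$; consequently $\mathcal{G}^2_{>0}(G, \alpha) = \emptyset$ for every $\alpha \neq 0$. Theorem~\ref{Theorem Lichnerowicz} further gives $\sigma(M) = 0$, so only points of the form $(\chi(M), 0)$ with $M$ inessential can appear in $\mathcal{G}^2_{>0}(G)$.

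For the Euler-characteristic bound, observe that $b_1(M) = b_1(G)$ and that the Hopf exact sequence delivers a surjection $f_* : H_2(M;\Q) \twoheadrightarrow H_2(G;\Q)$. Dually, $f^* : H^2(G;\Q) \hookrightarrow H^2(M;\Q)$ is injective, so its image $V$ has dimension $b_2(G)$. Naturality of the cap product combined with $f_*[M] = 0$ shows that $V$ is isotropic for the intersection form:
\begin{equation*}
\langle f^*a \cup f^*a', [M] \rangle = \langle a \cup a', f_*[M] \rangle = 0 \quad \text{for all } a, a' \in H^2(G; \Q).
\end{equation*}
Since $\sigma(M) = 0$, maximal isotropic subspaces of the non-degenerate intersection form on $H^2(M;\Q)$ have dimension $b_2(M)/2$, so $b_2(G) \leq b_2(M)/2$. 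Invoking the hypothesis $b_1(G) - b_2(G) = \Def(G)$ then yields
\begin{equation*}
\chi(M) = 2 - 2b_1(G) + b_2(M) \geq 2 - 2b_1(G) + 2b_2(G) = 2 - 2\Def(G).
\end{equation*}

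For realisability, Theorem~\ref{Theorem Carr} applied to a presentation of $G$ attaining the deficiency produces a closed spin $4$-manifold $M_0$ with $\pi_1(M_0) \cong G$, $\sigma(M_0) = 0$, $\chi(M_0) = 2 - 2\Def(G)$, and a metric of positive scalar curvature; being a double, it is null-cobordant over $BG$ and hence inessential. Iterated connected sums $M_0 \# k(S^2 \times S^2)$ preserve $\pi_1$, $w_2 = 0$, $\sigma = 0$, inessentiality and positive scalar curvature by Theorem~\ref{Theorem PSC}, while adding $2k$ to the Euler characteristic; conversely $\sigma(M) = 0$ forces $b_2(M)$ to be even, so only even excesses occur. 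This gives precisely the set in (\ref{Value Minimizer}) and, after minimising, (\ref{Value Minimizer1}). The delicate step, and the only place the hypothesis $b_1(G) - b_2(G) = \Def(G)$ intervenes, is the isotropy argument on $H^2(M;\Q)$: without it one only obtains $\chi(M) \geq 2 - 2(b_1(G) - b_2(G))$, which falls short of the Carr upper bound whenever the presentation-theoretic inequality for the deficiency is strict.
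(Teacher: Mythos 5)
Your proof is correct and follows the same overall architecture as the paper's: Rosenberg's theorem plus the strong Novikov conjecture to empty out $\mathcal{G}^2_{>0}(G,\alpha)$ for $\alpha\neq 0$ (the paper packages this as Proposition \ref{Proposition Spin Essential}, whose content --- that an essential spin $4$-manifold has nontrivial class in $KO_4(BG)$ --- you are implicitly using and should cite), Lichnerowicz--Schr\"odinger for $\sigma=0$, and Carr's manifold $M^4(G)$ together with Gromov--Lawson surgery on $S^2\times S^2$ summands for realisation. The one place you genuinely diverge is the lower bound $\chi(M)\geq 2-2\Def(G)$: the paper simply quotes the Kirk--Livingston/Kotschick inequality $2-2(b_1(G)-b_2(G))+|\sigma|\leq q_0(G,\sigma)$ (Proposition \ref{Proposition Geography Zero}) and combines it with Hypothesis (\ref{Hypothesis 1}), whereas you reprove it in the case you need via the observation that $f^*H^2(G;\Q)$ is a $b_2(G)$-dimensional isotropic subspace of the intersection form when $f_*[M]=0$, hence of dimension at most $b_2(M)/2$ when $\sigma(M)=0$. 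This is essentially the standard proof of the quoted proposition, so it buys self-containedness rather than new generality; it also transparently isolates where the extremal hypothesis $b_1(G)-b_2(G)=\Def(G)$ enters. A further small improvement over the paper's write-up is that you explicitly justify why only even excesses $2k$ occur (since $\sigma(M)=0$ forces $b_2(M)$ even), a point the paper's proof leaves implicit when it passes from the constructed family $M_k$ to the claimed equality for the full set $\mathcal{G}^2_{>0}(G)$.
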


Rosenberg's result and the strong Novikov conjecture  are briefly recalled in Section \ref{Section NPositiveScalarCurvature}. Gromov-Lawson introduced a method to make use of the index of the Dirac operator to obstruct positive scalar curvature through the notion of an enlargeable manifold in \cite{[GromovLawson1]}. While their techniques where originally deployed for orientable manifolds with $w_2 = 0$, they dropped this hypothesis condition in \cite{[GromovLawson3]}; cf. Cecchini-Schick \cite{[CecchiniSchick]}.

Schoen-Yau's stable minimal surface technique \cite{[SchoenYau]} does not make any assumption on the second Stiefel-Whitney class, and it has proven to be a powerful obstruction technique provided that the manifold and its fundamental group satisfy the following property.

\begin{definition}\label{Definition Property NPSC}A group $G$ satisfies Property NPSC if $H^1(G; \Z) = \Z^s$ for $s\geq 2$ with generators $\{\theta_1, \theta_2\}$ that satisfy\begin{equation}\label{Element}\theta_1\cap \theta_2\cap f_\ast([M])\neq 0\in H_2(BG; \Z)\end{equation}for $f:M\rightarrow BG$ the reference map of a closed orientable 4-manifold $M$ with $\pi_1(M) = G$. 
\end{definition}

Section \ref{Section NPositiveScalarCurvature} contains further details on minimal stable hypersurfaces and the formal obstruction to the existence of a metric of positive scalar curvature that arises from them as observed by Schick \cite{[Schick]}; see Theorem \ref{Theorem Schick}. We show in Example \ref{Example RAAGs} that right-angled Artin groups satisfy Property NPSC. 

A central conjecture in the study of geometry and topology of Riemannian manifolds of positive scalar curvature states that a closed aspherical $n$-manifold does not admit such a metric. The case $n = 2$ corresponds to the Gauss-Bonnet theorem and the case $n = 3$ has been proven by Gromov-Lawson \cite{[GromovLawson1], [GromovLawson2], [GromovLawson3]} and Schoen-Yau \cite{[SchoenYau], [SchoenYau1]} along with grounbreaking results of Perelman \cite{[Perelman1], [Perelman2], [Perelman3]}.  Schoen-Yau proposed a strategy for the case $n = 4$ in \cite[Theorem 6]{[SchoenYau2]} and Chodosh-Li followed Schoen-Yau's strategy showed that a closed aspherical 4-manifold does not admit a Riemannian metric of positive scalar curvature \cite[Theorem 2]{[ChodoshLi]} using generalized soap bubbles: submanifolds that are stationary for prescribed mean-curvature functionals. We point out in Lemma \ref{Lemma Gromov} that recent work of Gromov \cite[Main Theorem]{[Gromov]} and Chodosh-Li \cite[Theorem 2]{[ChodoshLi]} regarding non-existence of Riemannian metrics of positive scalar curvature on closed aspherical 5-manifolds implies the corresponding statement on certain essential 4-manifolds.

An application of the stable minimal hypersurfaces, enlargeability and generalized soap bubbles methods yield the following result.

\begin{thm}\label{Theorem B} Let $G$ be a finitely presented group whose first and second Betti numbers satisfy\begin{equation}\label{Hypothesis 1}b_1(G) - b_2(G) = \Def(G),\end{equation}where $\Def(G)$ is its deficiency. Suppose that either\begin{itemize}\item $H_4(G) = 0$,
\item $G$ satisfies Property NPSC of Definition \ref{Definition Property NPSC} or \item $G$ is the fundamental group of a closed orientable aspherical 4-manifold.
\end{itemize}

The sets (\ref{Geography Alpha 1}) are empty for every $\alpha \neq 0$. We have that\begin{equation}\label{Value Minimizer}\mathcal{G}_{> 0}(G)  = \{(2 - 2\Def(G) + |k|, k) : k\in \Z\}\end{equation}and\begin{equation}\label{Value Minimizer Spin}\mathcal{G}_{> 0}^2(G)  = \{(2 - 2\Def(G) + 2|k|, 0) : k\in \Z\}\end{equation}The values of the modified Hausmann-Weinberger invariants are\begin{equation}\label{Value Minimizer1} q_{> 0}(G) = 2 - 2\Def(G) = q_{> 0, 0}(G, 0)\end{equation}and\begin{equation}\label{Value Minimizer2}q_{> 0, 0}(G, \sigma) = |\sigma| + 2 - 2\Def(G) = q_{> 0}(G, \sigma).\end{equation}
\end{thm}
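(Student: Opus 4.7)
The strategy is the two-step construction/obstruction scheme outlined in the introduction. For the realization half, I would start from the manifold produced by Theorem \ref{Theorem Carr}: under the deficiency assumption $b_1(G) - b_2(G) = \Def(G)$, it supplies a closed orientable spin 4-manifold $N$ with $\pi_1(N)\cong G$, null reference class $f_\ast([N]) = 0 \in H_4(G)$, a Riemannian metric of positive scalar curvature, $\chi(N) = 2 - 2\Def(G)$ and $\sigma(N) = 0$. Taking connected sums with $k$ copies of $S^2\times S^2$ and invoking the Gromov-Lawson-Schoen-Yau surgery theorem (Theorem \ref{Theorem PSC}) gives the right-hand side of (\ref{Value Minimizer Spin}), since $w_2$ remains zero and the reference map can be arranged to continue to kill the fundamental class. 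Taking connected sums of $N$ with copies of $\mathbb{CP}^2$ and $\overline{\mathbb{CP}^2}$ instead yields the values in (\ref{Value Minimizer}), realizing every pair $(2 - 2\Def(G) + |k|, k)$ with $k \in \Z$.

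For the obstruction half, I would show that the sets $\mathcal{G}^{\dagger}_{>0}(G,\alpha)$ are empty whenever $\alpha \neq 0$ by dispatching the three hypotheses separately. When $H_4(G) = 0$ the claim is vacuous. When $G$ satisfies Property NPSC of Definition \ref{Definition Property NPSC}, I would apply Schick's stable minimal hypersurface theorem (Theorem \ref{Theorem Schick}): the existence of generators $\theta_1,\theta_2 \in H^1(G;\Z)$ with $\theta_1 \cap \theta_2 \cap f_\ast([M]) \neq 0$ precludes a PSC metric on any closed orientable 4-manifold $M$ with $\pi_1(M) = G$ and $f_\ast([M]) = \alpha \neq 0$. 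When $G = \pi_1(K)$ for a closed aspherical 4-manifold $K$, Lemma \ref{Lemma Gromov}, built from the Chodosh-Li and Gromov non-existence results, rules out positive scalar curvature on every closed orientable essential 4-manifold with fundamental group $G$; in particular $\alpha \neq 0$ cannot be realized.

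Putting realization and obstruction together, the sets (\ref{Value Minimizer}) and (\ref{Value Minimizer Spin}) are exactly what is claimed, and the identities (\ref{Value Minimizer1}) and (\ref{Value Minimizer2}) for the modified Hausmann-Weinberger invariants follow by taking minima, invoking the Lichnerowicz-Schr\"odinger theorem (Theorem \ref{Theorem Lichnerowicz}) to enforce $\sigma = 0$ in the $w_2 = 0$ regime. The main subtlety, I expect, lies in the third case: one must verify that Lemma \ref{Lemma Gromov} genuinely reaches every essential 4-manifold with fundamental group $G$, and not merely the aspherical representative $K$ itself; similarly, in the Property NPSC case one needs that the hypothesis supplies cap-product-detecting cohomology classes uniformly across every nonzero $\alpha \in H_4(G)$ rather than merely for a single distinguished class. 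Checking the compatibility of reference maps under the connected sum constructions on the realization side, and in particular ensuring that $w_2$-type is preserved throughout, is routine but must be carried out carefully.
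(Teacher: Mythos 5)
Your overall architecture matches the paper's: realization via the Carr manifold $M^4(G)$ plus connected sums with $S^2\times S^2$ (spin case) or $\mathbb{CP}^2$, $\overline{\mathbb{CP}^2}$ (general case) together with Theorem \ref{Theorem PSC}, and obstruction of $\alpha\neq 0$ via Theorem \ref{Theorem Schick} (through Proposition \ref{Proposition NPSC}) in the NPSC case and Lemma \ref{Lemma Gromov} in the aspherical case, with Theorem \ref{Theorem Lichnerowicz} forcing $\sigma=0$ when $w_2=0$.

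There is, however, one genuine gap. Realization gives only the inclusion $\supseteq$ in (\ref{Value Minimizer}) and (\ref{Value Minimizer Spin}), and the obstruction step gives only that every $M\in\mathcal{M}_{>0}(G)$ has $f_\ast([M])=0$. Neither rules out a 4-manifold with positive scalar curvature, fundamental group $G$, trivial reference class, and $\chi(M) < 2 - 2\Def(G) + |\sigma(M)|$; "taking minima" over the set you have constructed only yields an upper bound for $q_{>0}(G,\sigma)$, not its value. The missing ingredient is the lower bound of Proposition \ref{Proposition Geography Zero} (Kirk--Livingston, Kotschick): for $\alpha=0$ one has $2 - 2(b_1(G)-b_2(G)) + |\sigma| \leq q_0(G,\sigma) \leq 2 - 2\Def(G) + |\sigma|$, an inequality that holds for all closed oriented 4-manifolds with $f_\ast([M])=0$, not only those with positive scalar curvature. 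It is exactly here that Hypothesis (\ref{Hypothesis 1}) is used: it forces the two sides to coincide, so the Carr manifold and its connected sums exhaust the geography and the Hausmann--Weinberger identities (\ref{Value Minimizer1}) and (\ref{Value Minimizer2}) follow. Without citing this (or an equivalent Euler-characteristic estimate), the characterization of the sets as equalities is not established. Your remaining caveats (uniformity of the NPSC cap-product argument over all nonzero $\alpha$, reach of Lemma \ref{Lemma Gromov}, behavior of reference maps and $w_2$-type under connected sum) are legitimate points of care but are handled by the statements as given.
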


A list of groups that satisfy the hypothesis of Theorem \ref{Theorem B} includes right-angled Artin groups, Thompson's group, knot and 3-manifold groups, products of surface groups, several 2-knot and super perfect groups, solvable Baumslag-Solitar groups, and fundamental groups of Riemannian 4-manifolds whose sectional curvature is nonpositive. The last result that we mention samples how the geography problem of 4-manifolds with positive scalar curvature can be solved for several groups that do not satisfy Hypothesis (\ref{Hypothesis 1}).

\begin{pro}\label{Proposition C}Suppose $\gcd(p, q) \neq 1$ and let\begin{equation}\label{Examples Groups}G_{p, q} = \langle x, y : x^p = y^q = [x, y] = 1\rangle = \Z/p\oplus \Z/q.\end{equation}The identities\begin{equation}q_{> 0}(G_{p, q})  = 2 = q_{> 0}^2(G_{p, q}) \end{equation}and\begin{equation}q_{> 0, 0}(G_{p, q}, \sigma) = 2 + |\sigma|\end{equation}hold.
Moreover,\begin{equation}\label{Value Minimizer Proposition}\mathcal{G}_{> 0}(G_{p, q})  = \{(2 + |k|, k): k\in \Z_{\geq 0}\}\end{equation}and\begin{equation}
\mathcal{G}^{2}_{> 0}(G_{p, q}) = \{(2k + 2, 0): k\in \Z_{\geq 0}\} = \mathcal{G}^{2}_{> 0, 0}(G_{p, q}, 0).
\end{equation}
\end{pro}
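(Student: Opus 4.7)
The lower bounds in Proposition \ref{Proposition C} are elementary and require no PSC hypothesis. Since $G_{p,q}$ is finite, any closed oriented 4-manifold $M$ with $\pi_1(M)=G_{p,q}$ satisfies $b_1(M)=b_3(M)=0$, so $\chi(M)=2+b_2(M)\geq 2$; the inequality $|\sigma(M)|\leq b_2(M)$ and the parity $b_2(M)\equiv\sigma(M)\pmod 2$ confine every realized pair to the cone $\{(\chi,\sigma):\chi\geq 2+|\sigma|,\ \chi\equiv\sigma\pmod 2\}$. The task is to show these bounds are achieved by PSC representatives.

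My plan is to produce a single spin PSC seed 4-manifold $M_0$ with $\pi_1(M_0)=G_{p,q}$, $\chi(M_0)=2$, and $\sigma(M_0)=0$, and to derive all other values by connected sums with $S^2\times S^2$, $\mathbb{CP}^2$, and $\overline{\mathbb{CP}^2}$. The seed is produced from $W:=L(p,1)\times S^1$: the lens space $L(p,1)=S^3/\Z_p$ inherits a PSC metric from the round $S^3$, and the Riemannian product with a flat circle endows $W$ with a PSC metric and a spin structure (every orientable 3-manifold is spin, as is $S^1$); one has $\pi_1(W)=\langle x,y:x^p,[x,y]\rangle=\Z/p\oplus\Z$, $\chi(W)=0$, and $\sigma(W)=0$. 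I represent $y^q\in\pi_1(W)$ by an embedded circle $\gamma\subset W$; its normal bundle is trivial, and its two framings differ by the generator of $\pi_1(\So(3))=\Z/2$. Exactly one framing induces the bounding spin structure on $\partial\nu(\gamma)\cong S^1\times S^2$; performing the surgery with this framing yields a spin $M_0$. A $1$-surgery shifts the Euler characteristic by $+2$, so $\chi(M_0)=2$, whence $b_2(M_0)=0$ and $\sigma(M_0)=0$; the surgery imposes $y^q=1$, so $\pi_1(M_0)=\langle x,y:x^p,y^q,[x,y]\rangle=G_{p,q}$. The Gromov--Lawson surgery theorem (codimension three) transports the PSC metric across the surgery.

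Connected sums now realize the full geography. The sums $M_0\#k(S^2\times S^2)$ are spin PSC 4-manifolds with $(\chi,\sigma)=(2+2k,0)$; combined with the Lichnerowicz--Schr\"odinger obstruction (Theorem \ref{Theorem Lichnerowicz}) forcing $\sigma=0$ in the spin case, this yields $\mathcal{G}^2_{>0}(G_{p,q})=\{(2+2k,0):k\geq 0\}$ and $q^2_{>0}(G_{p,q})=2$. The sums $M_0\#k\mathbb{CP}^2$ and $M_0\#k\overline{\mathbb{CP}^2}$, together with further $S^2\times S^2$-summands to fill out the allowed Euler characteristics, realize every pair in the cone above by Theorem \ref{Theorem PSC}, giving $q_{>0,0}(G_{p,q},\sigma)=2+|\sigma|$, $q_{>0}(G_{p,q})=2$, and the claimed geography $\mathcal{G}_{>0}(G_{p,q})$. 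The main delicate step is ensuring $M_0$ is spin after the surgery, which is the framing choice noted above; the rest is an assembly of standard results.
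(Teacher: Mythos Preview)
Your proof is correct and follows essentially the same approach as the paper: both construct the seed manifold by a single $1$-surgery on the product of a lens space with a circle (the paper uses $S^1\times L(q,1)$ and kills $x^p$, you use $L(p,1)\times S^1$ and kill $y^q$) and then take connected sums with $\mathbb{CP}^2$, $\overline{\mathbb{CP}^2}$, and $S^2\times S^2$. Your explicit framing argument for the spin structure on $M_0$ and your direct lower-bound computation (from $b_1=b_3=0$ for finite $\pi_1$) are mild refinements of the paper, which asserts $w_2(M_{p,q})=0$ without justification and invokes Proposition~\ref{Proposition Geography Zero} for the bound.
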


The requirement $\gcd(p, q)\neq 1$ guarantees that the group (\ref{Examples Groups}) is not cyclic. None of the groups (\ref{Examples Groups}) satisfy Hypothesis (\ref{Hypothesis 1}) since their deficiency is -1.

We finish this introduction by mentioning that the geography problem of closed orientable 4-manifolds for a group $G$ and the Hausmann-Weinberger invariant themselves have not been determined except for a handful of groups; see Hausmann-Weinberger \cite{[HausmannWeinberger]}, Hildum \cite{[Hildum]}, Kirk-Livingston \cite{[KirkLivingston1], [KirkLivingston2]}, Kotschick \cite{[Kotschick]}.

\subsection{Acknowledgements:}We thank the referee for her/his detailed referee report, which helped us improve the manuscript. R. T. thanks Thomas Schick for useful e-mail correspondence. We are happy to join the  long list of mathematicians that express their gratitude to Misha Gromov for the wonderful mathematics throughout the years.

\section{Obstructions to the existence of a Riemannian metric of positive scalar curvature}\label{Section NPositiveScalarCurvature}

The Dirac operator $D$ on sections of the spinor bundle $S\rightarrow M$ of an orientable Riemannian 4-manifold $(M, g)$ with second Stiefel-Whitney class $w_2(M) = 0$ yields the Bochner-Lichnerowicz-Weitzenb\"ock formula\begin{equation}\label{BLW Formula}D^2 = \nabla^\ast \nabla + \frac{1}{4}\Scal_g\end{equation} where $\nabla$ is the connection on $S$, $\nabla^\ast$ its adjoint, and $\nabla^\ast \nabla$ is the connection Laplacian \cite[\* Chapter II]{[LawsonMichelsohn]}. The formula (\ref{BLW Formula}), due to work of Lichnerowicz \cite{[Lichnerowicz]} and Schr\"odinger \cite{[Schroedinger]}, indicates that the scalar curvature $\Scal_g$ is both an obstruction for $D$ to be the square root operator of the Laplacian, and an error term that arises when the identity $D^2 = \nabla^\ast \nabla$ is extended from the flat to the curved case.

The formula (\ref{BLW Formula}) states that a necessary condition for $\Scal_g > 0$ is that the kernel of the Dirac operator must be trivial and its index must vanish. The Atiyah-Singer Index Theorem \cite[Theorem 13.10, Chapter III]{[LawsonMichelsohn]} allows us to phrase this obstruction in terms of a rational Pontrjiagyn number of $M$ known as the $\hat{A}$-genus \cite[\S 8 Chapter II, \S 11 Chapter III]{[LawsonMichelsohn]}. 

\begin{theorem}\label{Theorem Lichnerowicz} Lichnerowicz \cite{[Lichnerowicz]}, Schr\"odinger \cite{[Schroedinger]}. Let $(M, g)$ be a closed orientable Riemannian 4-manifold with second Stiefel-Whitney class $w_2(M) = 0$. If $\Scal_g > 0$, then $\hat{A}(M) = 0$.
\end{theorem}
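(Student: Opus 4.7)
The plan is to combine the Bochner--Lichnerowicz--Weitzenb\"ock formula (\ref{BLW Formula}) already stated in the text with the Atiyah--Singer index theorem to force the index of the Dirac operator, and hence $\hat{A}(M)$, to vanish. The hypothesis $w_2(M)=0$ ensures that $M$ carries a spin structure, so the $\Z/2$-graded spinor bundle $S = S^+\oplus S^-$ is well-defined, and the Dirac operator decomposes as $D = D^+ + D^-$ with $D^\pm: \Gamma(S^\pm)\to \Gamma(S^\mp)$ and $(D^+)^\ast = D^-$.

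First, I would show that $\ker D = 0$. For any smooth spinor $\phi\in \Gamma(S)$, formula (\ref{BLW Formula}) together with integration by parts on the closed manifold $M$ yields
\begin{equation}
\int_M |D\phi|^2 \, dV_g \;=\; \int_M |\nabla \phi|^2\, dV_g \;+\; \frac{1}{4}\int_M \Scal_g\, |\phi|^2\, dV_g.
\end{equation}
Since $M$ is closed and $\Scal_g > 0$ everywhere, there exists $c > 0$ with $\Scal_g \geq c$ on $M$. If $D\phi = 0$, both non-negative terms on the right must vanish, and the curvature term forces $\phi \equiv 0$. Applied to $D^+$ and $D^-$ separately, this shows $\ker D^+ = 0 = \ker D^- = \ker (D^+)^\ast$, so the analytical index of $D^+$ is zero.

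Next, I would invoke the Atiyah--Singer index theorem for the spin Dirac operator (as in \cite[Theorem 13.10, Chapter III]{[LawsonMichelsohn]}), which identifies
\begin{equation}
\textnormal{ind}(D^+) \;=\; \hat{A}(M).
\end{equation}
Combining with the vanishing of $\ker D^\pm$ above gives $\hat{A}(M) = 0$, completing the argument.

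The proof is essentially standard and presents no serious obstacle; the only subtlety worth emphasizing is that compactness of $M$ together with strict positivity $\Scal_g > 0$ pointwise are needed to obtain a uniform positive lower bound $\Scal_g \geq c > 0$, which is what actually makes the curvature term in the Weitzenb\"ock identity coercive enough to kill harmonic spinors. Non-compact or merely non-negative cases would require extra care, but neither situation arises here.
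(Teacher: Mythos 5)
Your proof is correct and follows exactly the route the paper itself sketches in the paragraphs preceding the theorem: the Bochner--Lichnerowicz--Weitzenb\"ock formula (\ref{BLW Formula}) plus integration by parts kills harmonic spinors when $\Scal_g>0$, so the index of $D^+$ vanishes, and the Atiyah--Singer index theorem identifies that index with $\hat{A}(M)$. Nothing is missing; your remark about needing compactness to get the uniform lower bound $\Scal_g\geq c>0$ is the right subtlety to flag.
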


Hirzebruch's signature theorem \cite[Theorem 1.4.12]{[GompfStipsicz]} relates the $\hat{A}$-genus with the signature of the 4-manifold $M$ by $\hat{A}(M) = - \frac{1}{8}\sigma(M)$. In particular, the signature of a closed orientable Riemannian 4-manifold $(M, g)$ with $\Scal_g > 0$ is zero. 

We now briefly discuss a refinement of the $\hat{A}$-genus studied by Hitchin \cite{[Hitchin]} and Rosenberg \cite{[Rosenberg1]} that take into account the fundamental group of the manifold through the corresponding Eilenberg-MacLane space. Let $[M, f]\in \Omega^{\Spin}_n(BG)$ be the $\Spin$-bordism class of the pair $(M, f)$ where $f:M\rightarrow BG$, and and let $C^\ast_r(G)$ be the reduced $C^\ast$-algebra of the discrete group $G$. An invariant $\alpha(M)\in KO_n(C^\ast_r(G))$ is defined as the image of the fundamental class $[M, f]\in \Omega^{\Spin}(BG)$ under the map of generalized homology theories\begin{equation}\label{Sequence}\Omega^{\Spin}_\ast(BG)\overset{D}\longrightarrow ko_\ast(BG)\overset{per}\longrightarrow KO_\ast(BG)\overset{A}\longrightarrow KO_n(C^\ast_r(G)),\end{equation}where $ko_\ast(BG)$ and $KO_\ast(BG)$ are the periodic and connective real K-homology of the Eilenberg-MacLane space $BG$, respectively. That is,\begin{equation}\label{Obstruction Rosenberg}\alpha(M):= \alpha ([M, f]) = A\circ per\circ D([M, f]) = 0\in KO_n(C^\ast_r(G)).\end{equation}The invariant (\ref{Obstruction Rosenberg}) only depends on the bordism class $[M, f]$. For further details, we direct the reader towards the well-known surveys of Hanke \cite{[Hanke]}, Rosenberg \cite[Section 1.1.1]{[Rosenberg2]}, the chapter of Rosenberg-Stolz \cite{[RosenbergStolz]}, the lectures of Stolz \cite{[Stolz]}, and the book Lawson-Michelsohn \cite[Chapters II, III and IV]{[LawsonMichelsohn]}.



One of the fundamental results in obstructive results to positive scalar curvature is due to Rosenberg. He showed that the vanishing of the invariant (\ref{Obstruction Rosenberg}) is a necessary condition for the existence of a Riemannian metric of positive scalar curvature.

\begin{theorem}\label{Theorem Rosenberg} Rosenberg \cite{[Rosenberg1]}. Let $M$ be a closed orientable $n$-manifold with second Stiefel-Whitney class $w_2(M) = 0$. If there is a Riemannian metric $(M, g)$ whose scalar curvature satisfies $\Scal_g > 0$,  then $A\circ per \circ D([M, f]) = 0 \in KO_n(C^\ast_r(G))$. 
\end{theorem}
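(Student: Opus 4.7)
The plan is to imitate Lichnerowicz's proof after twisting the Dirac operator of $M$ by a canonical flat Hilbert $C^\ast_r(G)$-module bundle, and then to identify the resulting Miščenko--Fomenko index with the topologically-defined class $\alpha(M)=A\circ per\circ D([M,f])$. A positive scalar curvature metric will force the analytic index to vanish, and the identification will transfer the vanishing to $\alpha(M)$.

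First I would form the Miščenko--Fomenko bundle $\mathcal{L}=\widetilde{M}\times_G C^\ast_r(G)\to M$, where $G$ acts on the universal cover $\widetilde{M}$ by deck transformations and on $C^\ast_r(G)$ by left multiplication. Because the latter action is by isometries of the standard Hilbert $C^\ast_r(G)$-module structure, $\mathcal{L}$ is a flat bundle of free rank-one right Hilbert $C^\ast_r(G)$-modules. Since $w_2(M)=0$, the manifold $M$ admits a spin structure with spinor bundle $S\to M$, so I can form the tensor product $S\otimes\mathcal{L}$ and the associated twisted Dirac operator $D_\mathcal{L}$. In the Clifford-linear setup of Atiyah--Singer, $D_\mathcal{L}$ is an elliptic, essentially self-adjoint, Miščenko--Fomenko-Fredholm operator, so it has a well-defined analytic index $\ind(D_\mathcal{L})\in KO_n(C^\ast_r(G))$.

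Next I would invoke the Bochner--Lichnerowicz--Weitzenböck formula for the twisted Dirac operator. Since the connection on $\mathcal{L}$ is flat, the twisting curvature contribution is zero and the formula reduces to
\begin{equation}
D_\mathcal{L}^2 = \nabla^\ast\nabla + \tfrac{1}{4}\Scal_g\cdot \id.
\end{equation}
Compactness of $M$ together with $\Scal_g>0$ yields a uniform lower bound $\Scal_g\geq\varepsilon>0$, and therefore $D_\mathcal{L}^2\geq \tfrac{\varepsilon}{4}\cdot\id$ as an unbounded operator on the Hilbert $C^\ast_r(G)$-module of $L^2$-sections of $S\otimes\mathcal{L}$. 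This makes $D_\mathcal{L}$ invertible in the Miščenko--Fomenko sense, so a parametrix construction shows $\ind(D_\mathcal{L})=0\in KO_n(C^\ast_r(G))$.

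The final and genuinely hard step is to identify $\ind(D_\mathcal{L})$ with $A\circ per\circ D([M,f])$. The map $D$ sends the spin bordism class to the $ko$-theoretic fundamental class carried by the Dirac operator, $per$ periodizes it into $KO_\ast(BG)$, and $A$ is the assembly map whose analytic realization, by Kasparov's descent/Miščenko--Fomenko construction, takes the $K$-homology class of a Dirac-type operator on $M$ to the index of its Miščenko--Fomenko twist by the bundle $\mathcal{L}$. Checking that these conventions match on the nose, particularly in the $KO$-setting where orientation and Clifford-linearity have to be tracked carefully, is the main obstacle, and this is exactly what Rosenberg establishes in detail. Once this bordism-invariant identification is in place, the vanishing of $\ind(D_\mathcal{L})$ from the previous step yields $\alpha(M)=0\in KO_n(C^\ast_r(G))$, completing the proof.
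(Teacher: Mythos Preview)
The paper does not provide its own proof of this theorem; it is stated as a result of Rosenberg with a citation to \cite{[Rosenberg1]} and is used as a black box in the subsequent discussion. Your outline is a faithful sketch of Rosenberg's original argument: twist the Dirac operator by the flat Mi\v{s}\v{c}enko--Fomenko bundle $\mathcal{L}=\widetilde{M}\times_G C^\ast_r(G)$, apply the Bochner--Lichnerowicz--Weitzenb\"ock formula (no curvature correction because $\mathcal{L}$ is flat), use $\Scal_g>0$ to force invertibility and hence vanishing of the Mi\v{s}\v{c}enko--Fomenko index in $KO_n(C^\ast_r(G))$, and then identify that analytic index with $A\circ per\circ D([M,f])$ via Kasparov descent. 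You are right to flag the identification step as the one requiring real work; this is precisely what Rosenberg carries out. So there is no discrepancy to report: the paper has no proof to compare against, and what you wrote is the standard and correct argument.
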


If the group $G$ satisfies the Strong Novikov Conjecture, i.e., if the assembly map\begin{equation}\label{Assembly Map Injective}A: KO_\ast(BG)\rightarrow KO_\ast(C^\ast_r(G))\end{equation} in (\ref{Sequence}) is injective, then $per \circ D([M, f]) = 0\in KO_n(BG)$ in the presence of a Riemannian metric $(M, g)$ with $\Scal_g > 0$.

Theorem \ref{Theorem Rosenberg} and the injectivity of the map (\ref{Assembly Map Injective}) shift focus onto manifolds whose fundamental classes have nontrivial images in $ko_n(BG)$ and $KO_n(BG)$. An essential $4$-manifold satisfies $f_\ast([M]_K)\neq 0 \in K_n(BG))\otimes \Q$, where $[M]_K\in K_n(M)$ is the $K$-theoretical fundamental class of $M$; see \cite[Definition 2.5]{[Hanke]}. Indeed, every orientable 4-manifold admits a $\Spin^{\C}$-structure and the homological Chern character isomorphism $ch: K_n(M)\otimes \Q \rightarrow H_n(M; \Q)$ in this case yields $ch([M]_K) = [M] + c$ for $c\in H_i(M; \Q)$ with $i\leq n$, and $[M]$ is the fundamental class of $M$ in singular homology; see \cite[Page 284]{[Hanke]}, \cite[Section 4]{[HankeKotschickRoeSchick]}. If the second Stiefel-Whitney class of an essential n-manifold $M$ with fundamental group $G$ is zero, then the fundamental class of $M$ in connective $KO$-homology $[M]_{ko}:= D([M, \id])\in ko_n(BG)$ is nonzero; cf. \cite[Section 2.10]{[Stolz]}, \cite[3.3 Proposition]{[BolotovDranishnikov]}. Similarly,  an essential $n$-manifold $M$ with $w_2(M) = 0$ yields a nontrivial fundamental class in periodic $KO$-homology $per \circ D ([M, f]) =  f_{\ast}([M]_{KO}])\neq 0 \in KO_n(BG)$. We conclude that Theorem \ref{Theorem Rosenberg} has the following implication.

\begin{proposition}\label{Proposition Spin Essential}Let $M$ be an essential $n$-manifold and suppose $w_2(M) = 0$ . If the fundamental group $\pi_1(M)$ satisfies the Strong Novikov Conjecture, then $M$ does not admit a Riemannian metric of positive scalar curvature. 
\end{proposition}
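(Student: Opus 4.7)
The plan is to argue by contradiction, combining the obstruction furnished by Theorem \ref{Theorem Rosenberg} with the injectivity of the assembly map granted by the Strong Novikov Conjecture, and then showing that under the essentiality and spin hypotheses the image of the fundamental class in periodic $KO$-homology is nonzero, producing the required contradiction.

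First, I would assume that $M$ admits a Riemannian metric $g$ with $\Scal_g > 0$. Since $M$ is closed, orientable and satisfies $w_2(M) = 0$, Theorem \ref{Theorem Rosenberg} applies and gives\begin{equation}\alpha(M) = A \circ per \circ D([M, f]) = 0 \in KO_n(C^\ast_r(G)),\end{equation} where $f: M \to BG$ is the classifying map of the universal cover and $G = \pi_1(M)$.

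Next, I would invoke the Strong Novikov Conjecture for $G$: by definition the assembly map $A: KO_\ast(BG) \to KO_\ast(C^\ast_r(G))$ of (\ref{Assembly Map Injective}) is injective, so the vanishing of $\alpha(M)$ forces\begin{equation}per \circ D([M, f]) = f_\ast([M]_{KO}) = 0 \in KO_n(BG).\end{equation}

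Finally, I would exploit the hypothesis that $M$ is essential together with $w_2(M) = 0$. As recalled in the paragraph preceding the proposition, for an essential $n$-manifold $M$ with vanishing second Stiefel-Whitney class the fundamental class in connective $KO$-homology $[M]_{ko} = D([M, \id])\in ko_n(BG)$ is nontrivial, and applying the periodicity map yields $per \circ D([M, f]) = f_\ast([M]_{KO}) \neq 0 \in KO_n(BG)$. This directly contradicts the previous display, so no metric with $\Scal_g > 0$ can exist on $M$.

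The only nontrivial input is the nonvanishing of the fundamental class in $KO_n(BG)$ for an essential $M$ with $w_2(M) = 0$, which is already stated and sketched in the discussion preceding the proposition (via the Chern character identification of the rational $K$-theoretic and singular fundamental classes, together with the lift from $ko$ to $KO$). Modulo that ingredient, the argument is a direct chain of implications, and there is no real obstacle to overcome.
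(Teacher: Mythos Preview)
Your proposal is correct and follows exactly the argument the paper gives: the paper does not include a separate proof environment for this proposition, but rather presents it as an immediate consequence of Theorem~\ref{Theorem Rosenberg}, the injectivity of the assembly map, and the nonvanishing of $f_\ast([M]_{KO})\in KO_n(BG)$ for essential spin $M$, all of which are laid out in the paragraph preceding the statement. Your write-up simply packages that same chain of implications as a contradiction argument.
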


Gromov-Lawson \cite{[GromovLawson1]} introduced the concept of enlargeability to employ the role of the fundamental group to obstruct positive scalar curvature. Recall that a closed orientable Riemannian $n$-manifold $(M, g)$ is enlargeable if for every $\epsilon > 0$, there is a Riemannian cover $\hat{M}\rightarrow M$ and an $\epsilon$-contracting map $\hat{M}\rightarrow S^n(1)$ to the unit $n$-sphere that is constant outside a compact subset and of non-zero degree. The reader is referred to \cite{[GromovLawson2], [GromovLawson3]} for details.

\begin{theorem}\label{Theorem GromovLawson} Gromov-Lawson \cite[Theorem A]{[GromovLawson2]}, \cite[Section 12]{[GromovLawson3]}, Cecchini-Schick \cite[Theorem A]{[CecchiniSchick]}, Schoen-Yau \cite{[SchoenYau]}. Suppose $M$ is an enlargeable $n$-manifold with $n\leq 8$. There is no Riemannian metric $(M, g)$ such that $\Scal_g > 0$.
\end{theorem}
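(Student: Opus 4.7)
The plan is to argue by contradiction and combine two ingredients: a twisted Dirac operator on a large cover in the spin case, and either a $C^\ast$-algebraic refinement (Cecchini-Schick) or a minimal hypersurface induction (Schoen-Yau) in the non-spin case, where the dimension bound $n\leq 8$ enters. Suppose $M$ carries a metric $g$ with $\Scal_g > 0$; compactness gives a uniform lower bound $\Scal_g \geq \kappa > 0$.

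Assume first that $w_2(M) = 0$, so every cover is spin. For each $\epsilon > 0$, enlargeability supplies a Riemannian cover $\hat{M}\to M$ and an $\epsilon$-contracting map $h_\epsilon\colon \hat{M}\to S^n(1)$ that is constant outside a compact set and of nonzero degree. Pulling back a hermitian bundle $E_0\to S^n$ with nontrivial top Chern class via $h_\epsilon$ produces a bundle $E_\epsilon\to \hat{M}$ whose curvature is bounded pointwise by $C\epsilon^2$. The Bochner-Lichnerowicz-Weitzenb\"ock formula for the twisted Dirac operator reads
\[
D_{E_\epsilon}^2 = \nabla^\ast\nabla + \tfrac{1}{4}\Scal_g + \mathcal{R}^{E_\epsilon},
\]
with $|\mathcal{R}^{E_\epsilon}|\leq C'\epsilon^2$. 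Choosing $\epsilon$ so that $C'\epsilon^2 < \kappa/4$ forces $D_{E_\epsilon}$ to be invertible on $L^2(\hat{M})$, so its index vanishes. A relative Atiyah-Singer theorem (equivalently, a coarse/Roe index computation exploiting that $E_\epsilon$ is trivialized outside a compact set) evaluates the same index to a nonzero multiple of $\deg(h_\epsilon)\cdot\langle \hat{A}(S^n)\cdot\mathrm{ch}(E_0), [S^n]\rangle$, contradicting the vanishing.

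The non-spin case is handled in one of two ways. The Cecchini-Schick route replaces the complex spinor bundle by a Clifford bundle associated with a $\Spin^{\C}$-structure, which exists on every orientable $M$, and twists further by a flat Hilbert $C^\ast_r\pi_1(M)$-module pulled back from the universal cover; the same contractibility-versus-curvature estimate then produces a contradiction in $KO$-theoretic index theory without needing a global spin structure. The alternative, where the hypothesis $n\leq 8$ actually bites, is the Schoen-Yau inductive descent: enlargeability yields cohomology classes on some cover whose Poincar\'e duals are representable by stable minimal hypersurfaces. The second variation of area combined with a conformal rescaling promotes the induced metric on each such hypersurface to one of positive scalar curvature, and iterating $n-2$ times arrives at a closed surface of positive scalar curvature, hence a 2-sphere, contradicting the nontriviality of the original enlargeability class.

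The most delicate point is the index computation on the possibly non-compact cover $\hat{M}$: one has to set up a relative or coarse index on $\hat{M}$ and verify that the small-curvature bundle $E_\epsilon$ indeed contributes the expected topological term of $\deg(h_\epsilon)$. In the Schoen-Yau branch, the analogous obstacle is the regularity of area-minimizing hypersurfaces near codimension-one singularities, which restricts the inductive step to $n\leq 8$ and must be handled via Federer dimension reduction together with the generic smoothing results that push the bound all the way to $n=8$.
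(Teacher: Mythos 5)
This statement is quoted from the literature: the paper offers no proof of it at all, only the citations to Gromov--Lawson, Cecchini--Schick and Schoen--Yau, so there is no internal argument to compare yours against. Judged on its own terms, your sketch correctly reproduces the two standard strategies. The spin branch (small-curvature twisting bundle pulled back by an $\epsilon$-contracting map, Weitzenb\"ock estimate forcing invertibility, relative index theorem on the possibly non-compact cover forcing a nonzero index) is the Gromov--Lawson argument, and your Schoen--Yau branch correctly identifies that the hypothesis $n\leq 8$ is there only for the regularity of area-minimizing hypersurfaces in the inductive descent; the Dirac branch needs no dimension bound. One standard point you omit: for odd $n$ there is no bundle on $S^n$ with nontrivial top Chern class, so one first crosses with $S^1$ (enlargeability is preserved) and runs the even-dimensional argument on $M\times S^1$.

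The genuine flaw is your first proposed treatment of the non-spin case. Replacing the spinor bundle by the $\Spin^{\C}$ Dirac operator does not work as described: the Weitzenb\"ock formula for a $\Spin^{\C}$ Dirac operator contains, in addition to $\tfrac14\Scal_g$ and the curvature of the twisting bundle, the Clifford action of the curvature of the determinant line bundle of the $\Spin^{\C}$-structure. That term is a fixed topological/geometric quantity that cannot be made small by shrinking $\epsilon$, so positivity of $\Scal_g$ alone does not yield invertibility, and no contradiction results. This is precisely why the non-spin case is hard, and it is not what Cecchini--Schick do: their Theorem A is proved with the minimal-hypersurface method (which is also why the dimension restriction appears in the theorem as stated). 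Your argument survives only because your second, Schoen--Yau branch independently covers the non-spin case; you should delete or repair the $\Spin^{\C}$ branch rather than present it as an alternative proof.
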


When coupled with other interesting results, Theorem \ref{Theorem GromovLawson} poses constrains on the homotopy types of manifolds of positive scalar curvature in terms of their classifying map. A result of Hanke-Schick implies that $M$ is an essential $n$-manifold whenever $M$ is a closed oriented enlargeable $n$-manifold \cite[Theorem 5.1]{[HankeSchick]}. The converse of such statement does not hold: Brunnbauer-Hanke have constructed examples of essential $n$-manifolds that are not enlargeable for $n\geq 4$ in \cite[Theorem 1.5]{[BrunnbauerHanke]}.



The minimal stable hypersurfaces machinery of Schoen-Yau is a powerful obstruction to the existence of a metric of positive scalar curvature \cite[Proof of Theorem 1]{[SchoenYau]}. The setting is as follows. Suppose $M$ is a closed oriented $n$-manifold with $H^1(M; \Z)\neq 0$ and $3\leq n \leq 7$, and suppose $N\subset M$ is a closed stable minimal hypersurface such that its homology class $[N] \neq 0\in H_{n - 1}(M; \Z)$ is dual to a nonzero class in the first cohomology group. If there is a Riemannian metric $(M, g)$ with $\Scal_g > 0$, then there is a Riemannian metric of positive scalar curvature on the hypersurface $N$ within the conformal class of the induced metric.

Schick observed that iterations of Schoen-Yau's results on sequences of stable minimal hypersurfaces provides the following formal obstruction \cite[\S 1]{[Schick]}. We denote by $H_m^+(X; \Z)$ the subset of the mth-homology group $H_m(X; \Z)$ of any space $X$ that consists of classes $f_\ast([M])$ where $f:M\rightarrow X$ for $(M, g)$ that satisfies $\Scal_g >0$.

\begin{theorem}\label{Theorem Schick} Schick \cite[Corollary 1.5]{[Schick]}. Let $X$ be any space and $\theta\neq 0\in H^1(X; \Z)$. The cap product with $\theta$ induces a map\begin{equation}\theta\cap: H_m(X; \Z)\rightarrow H_{m - 1}(X; \Z)\end{equation}that sends\begin{equation}H_m^+(X; \Z)\rightarrow H_{m - 1}^+(X; \Z)\end{equation}for $3\leq m\leq 7$.
\end{theorem}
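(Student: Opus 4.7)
The plan is to reduce the statement to the Schoen-Yau stable minimal hypersurface construction applied to a representative manifold. Fix $\alpha \in H_m^+(X;\Z)$, so by definition $\alpha = f_\ast([M])$ for some closed oriented $m$-manifold $M$ admitting a Riemannian metric $g_M$ with $\Scal_{g_M} > 0$, together with a continuous map $f: M \to X$. Pull the cohomology class back to obtain $f^\ast \theta \in H^1(M;\Z)$. The projection formula for the cap product gives
\begin{equation}
\theta \cap f_\ast([M]) = f_\ast\bigl(f^\ast \theta \cap [M]\bigr) \in H_{m-1}(X;\Z),
\end{equation}
so it suffices to realize $f^\ast\theta \cap [M]$ as the pushforward to $M$ of the fundamental class of a closed oriented $(m-1)$-manifold of positive scalar curvature.

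If $f^\ast\theta = 0$ the claim is trivial, as $0 \in H_{m-1}^+(X;\Z)$ via the constant map from the empty manifold (or from $S^{m-1}$ with its round metric). Otherwise, smooth a classifying map $M\to K(\Z,1)=S^1$ of $f^\ast\theta$ to obtain $\phi: M\to S^1$, so that the preimage of a regular value is a closed oriented codimension-one submanifold $N_0 \subset M$ whose class satisfies $[N_0] = f^\ast\theta \cap [M] \in H_{m-1}(M;\Z)$. Within this integral homology class, Federer-Fleming compactness produces an area-minimizing integral current $N\subset M$. The hypothesis $3\leq m\leq 7$ ensures the ambient dimension $m \leq 7$ lies within the codimension-one regularity range for mass-minimizing currents, so $N$ can be taken to be a smooth, closed, oriented, embedded, stable minimal hypersurface representing $f^\ast \theta \cap [M]$.

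The heart of the argument is the Schoen-Yau observation: since $(M,g_M)$ has positive scalar curvature and $N$ is a two-sided stable minimal hypersurface, combining the stability inequality with the Gauss equation shows that a Schr\"odinger-type operator on $N$ (built from the induced scalar curvature) has positive first eigenvalue. Using the corresponding eigenfunction as a conformal factor yields a metric $\tilde g$ on $N$ with $\Scal_{\tilde g} > 0$. Setting $F := f\circ \iota: N\to X$ for $\iota: N\hookrightarrow M$ the inclusion, one computes
\begin{equation}
F_\ast([N]) = f_\ast\bigl(\iota_\ast([N])\bigr) = f_\ast\bigl(f^\ast\theta \cap [M]\bigr) = \theta \cap \alpha,
\end{equation}
exhibiting $\theta\cap\alpha$ as an element of $H_{m-1}^+(X;\Z)$.

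The main obstacle I expect is the geometric-measure-theoretic regularity input: the dimensional bound $m\leq 7$ is precisely what guarantees that codimension-one area minimizers are smooth hypersurfaces, and the conformal scalar-curvature argument of Schoen-Yau requires this smoothness in order to solve the relevant eigenvalue problem on $N$. Beyond this range the singular set of the minimizer would obstruct the construction, and replacing minimizers by $\mu$-bubbles or generalized soap bubbles (as in Gromov and Chodosh-Li) becomes necessary. Once this geometric input is available, the topological manipulations via the projection formula are formal.
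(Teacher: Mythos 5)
Your argument is correct and is essentially the proof the paper is invoking: the paper does not reprove this statement but cites Schick's Corollary 1.5, whose mechanism (pull back $\theta$, represent its Poincar\'e dual by a smooth area-minimizing hypersurface using regularity in ambient dimension at most $7$, and apply the Schoen--Yau stability/conformal argument to transfer positive scalar curvature, then push forward via the projection formula) is exactly what you reconstruct. The preceding paragraph of the paper sketches this same Schoen--Yau stable minimal hypersurface input, so there is no substantive difference in approach.
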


The well-known necessary conditions for enlargeability \cite[Corollary C]{[GromovLawson1]}, \cite[\S 5]{[GromovLawson3]}, \cite[Theorem 5.3, Theorem 5.4]{[LawsonMichelsohn]} along with Theorem \ref{Theorem GromovLawson} along Theorem \ref{Theorem Schick} yield the following result.

\begin{proposition}\label{Proposition NPSC} Let $M$ be an essential $n$-manifold. Suppose that the fundamental group $\pi_1(M) = G$ either satisfies Property NPSC of Definition \ref{Definition Property NPSC} or that it is isomorphic to the fundamental group of a closed orientable Riemannian $4$-manifold $(N, g)$ of nonpositive sectional curvature. There is no Riemannian metric on $M$ of positive scalar curvature. In particular, the set\begin{equation}\label{Set PSC}\mathcal{G}_{> 0}(G, \alpha) = \mathcal{G}_{> 0}^1(G, \alpha) \sqcup \mathcal{G}_{> 0}^2(G, \alpha)  \sqcup \mathcal{G}_{> 0}^3(G, \alpha)\end{equation} is empty for every $\alpha \neq 0 \in H_4(G)$.
\end{proposition}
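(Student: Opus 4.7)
The plan is to argue by contradiction: assume $M$ is an essential $n$-manifold with fundamental group $G$ that carries a Riemannian metric with $\Scal_g > 0$, and let $f: M \to BG$ denote the reference map with $\alpha := f_{\ast}([M]) \neq 0$. I will handle the two alternative hypotheses separately and then derive the ``in particular'' clause from either.

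Under the first hypothesis I intend to invoke Schick's obstruction (Theorem \ref{Theorem Schick}) twice. Property NPSC furnishes $\theta_1, \theta_2 \in H^1(BG;\Z)$ with $\theta_1 \cap \theta_2 \cap \alpha \neq 0 \in H_2(BG;\Z)$. Since $M$ supports a PSC metric, $\alpha = f_{\ast}([M]) \in H_4^+(BG;\Z)$; applying Schick's cap-product descent in dimension $m=4$ and then $m=3$ (both inside the admissible range $3 \leq m \leq 7$) yields $\theta_2 \cap \theta_1 \cap \alpha \in H_2^+(BG;\Z)$. The key closing observation will be that $H_2^+(BG;\Z) = 0$: Gauss--Bonnet forces the only closed orientable PSC surface to be $S^2$, and $\pi_2(BG) = 0$ makes every map $S^2 \to BG$ null-homotopic, hence zero on $H_2$. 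Up to the sign from graded commutativity of cup product, this contradicts Property NPSC.

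Under the second hypothesis the plan is to show that $M$ is enlargeable and invoke Theorem \ref{Theorem GromovLawson}. The Cartan--Hadamard theorem gives $\widetilde{N} \cong \R^4$, so $N$ is aspherical and $BG \simeq N$; the standard scale-down-the-identity construction on $\R^4$ then exhibits $N$ as enlargeable, compare \cite[Theorem 5.3, Theorem 5.4]{[LawsonMichelsohn]}. Since $M$ is essential and $H_4(N;\Z) \cong \Z$ by Poincar\'e duality, the classifying map $f: M \to N$ has nonzero degree. I then transfer enlargeability by lifting $f$ to universal covers, $\hat{f}: \widetilde{M} \to \widetilde{N}$, which is possible since $f_{\ast}$ is a $\pi_1$-isomorphism; $\hat{f}$ is $G$-equivariant and therefore proper (using compactness of $M$ and the properly discontinuous action on $\widetilde{N}$). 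Composing with the $\epsilon$-contracting map $\phi: \widetilde{N} \to S^4$ of the enlargeability setup gives $\phi \circ \hat{f}$, which is $(L\epsilon)$-contracting with $L = \|df\|_\infty$ finite, constant outside the compact set $\hat{f}^{-1}(\supp \phi)$, and of degree $\deg(f) \neq 0$. Letting $\epsilon \to 0$ exhibits $M$ as enlargeable.

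The ``in particular'' conclusion then follows immediately: any $(M, f) \in \mathcal{M}^{\dagger}_{> 0}(G, \alpha)$ with $\alpha \neq 0$ supplies an essential PSC 4-manifold already forbidden above. The step I expect to be most delicate is Case~2's enlargeability transfer, where one must simultaneously control the Lipschitz constant, ensure compact support, and preserve nonzero degree of the pulled-back contracting map; essentiality of $M$ is exactly what rescues the degree computation, while the properness of $\hat{f}$ (from the proper discontinuity of the deck action and the compactness of $M$) secures the compact support condition.
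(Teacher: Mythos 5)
Your proposal is correct and follows essentially the same route as the paper: the Property NPSC case is handled by two applications of Schick's cap-product descent (Theorem \ref{Theorem Schick}) combined with the vanishing of $H_2^+(BG;\Z)$, and the nonpositive-curvature case by Cartan--Hadamard asphericity, the nonzero-degree reference map $f:M\to N$, and enlargeability via Theorem \ref{Theorem GromovLawson}. The only difference is that you unpack the enlargeability-transfer step (lifting to covers and composing with the $\epsilon$-contracting map), which the paper simply cites as \cite[Theorem 5.3]{[LawsonMichelsohn]}.
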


\begin{proof} The proof of Proposition \ref{Proposition NPSC} in the case of a group $G$ that satisfies Property NPSC is due to Schick \cite[\S 2]{[Schick]}; see Theorem \ref{Theorem Schick}. Let $f:M\rightarrow BG$ be the reference map into the classifying space $BG$. The universal coefficient theorem implies that the induced map on cohomology $f^\ast: H^1(G)\rightarrow H^1(M; \Z)$ is an isomorphism given that the induced map in homology $f_\ast: H_1(M; \Z)\rightarrow H_1(G)$ is an isomorphism. Hence, $H^1(M; \Z) = \Z^s$ for $s\geq 2$ and we have generators $\{\theta_1, \theta_2\}$ that satisfy $\theta_1\cap \theta_2\cap f_\ast(M])\neq 0 \in H_2(BG; \Z)$. We employ Schoen-Yau's stable minimal hypersurfaces method \cite{[SchoenYau]} and proceed by contradiction. Suppose there is a metric of positive scalar curvature $(M, g)$ and $f_\ast([M]) = \alpha \neq 0\in H_4(BG) = H_4(G)$. This implies that there is a non-trivial element in $H^+_4(BG)$. A pair of applications of Theorem \ref{Theorem Schick} implies the existence of an element $a\neq 0 \in H_2^+(BG)$. However, the only closed oriented 2-manifold that admits a metric of positive scalar curvature is the 2-sphere, and any map $g: S^2\rightarrow BG$ is nullhomotopic since the second homotopy group of the Eilenberg-MacLane space $BG$ is trivial. We then have that $g_\ast([S^2]) = 0\in H_2(BG)$ and $H_2^+(BG) = 0$. Therefore, we conclude that a closed oriented 4-manifold $M$ whose fundamental group has non-trivial first cohomology group and $f_{\ast}([M])\neq 0 \in H_4(G)$ does not admit a Riemannian metric of positive scalar curvature.

We now argue the case where $\pi_1(M) = \pi_1(N)$ for a closed orientable Riemannian 4-manifold $(N, g_N)$ with $\sec_{g_N}\leq 0$. The 4-manifold $N$ is enlargeable by \cite{[GromovLawson1], [GromovLawson3]}; cf. \cite[Theorem 5.4]{[LawsonMichelsohn]}. The Hadamard-Cartan theorem implies that the universal cover $\widetilde{N} = \R^n$ is contractible and $N$ is an aspherical 4-manifold. Therefore, the Eilenberg-MacLane space is $B\pi_1(M) = N$. If $f_\ast([M])\neq 0\in H_4(B\pi_1(M)) = H_4(N)$, there is a non-zero degree map $f:M\rightarrow N$ and $M$ is an enlargeable 4-manifold by \cite{[GromovLawson1], [GromovLawson3]}, \cite[Theorem 5.3]{[LawsonMichelsohn]}. Theorem \ref{Theorem GromovLawson} implies that there is no metric on $M$ with positive scalar curvature and the set (\ref{Set PSC}) is empty for every $\alpha \neq 0\in H_4(N)$. \end{proof}

Theorem \ref{Theorem Schick} is particularly efficient on essential manifolds whose fundamental group is a right-angled Artin group (RAAG).

\begin{example}\label{Example RAAGs}RAAG's with non-zero fourth homology group satisfy Property NPSC of Definition \ref{Definition Property NPSC}. A right-angled Artin group is a group with a presentation
\begin{equation}\label{RAAG Presentation}G = \langle g_1, \ldots, g_s: g_ig_j = g_j g_i \forall i\neq j \rangle.\end{equation}The presentation (\ref{RAAG Presentation}) is specified by a defining graph $\Gamma$, whose vertices are labelled by the generators $\{g_1, \ldots, g_s\}$ and an edge of $\Gamma$ that connects two of them $g_i$ and $g_j$ exists if and only if $g_ig_j = g_jg_i$; see the survey article of Charney \cite{[Charney]} for further details on these groups.
The Eilenberg-MacLane space $BG$ is the Salvetti complex \cite{[Salvetti]} \cite[Section 3.6]{[Charney]}, and Charney-Davis used it to compute its cohomology ring in \cite{[CharneyDavis]}. The chain complex of $BG$ injects into the chain complex of a $k$-torus and given that all chain and cochain maps in the chain complex of a torus are zero, the only nontrivial cup products in the cohomology ring $H^\ast(G)$ arise from the commuting generators in the presentation (\ref{RAAG Presentation}); see \cite{[Charney], [CharneyDavis], [Hildum]}. The vertices of $\Gamma$ represent generators $\{\gamma_1, \ldots, \gamma_s\}$ of $H^1(G) = H^1(BG)$ while its edges represent generators of $H^2(G) = H^2(BG)$ that arise as cup products $\gamma_i\cup \gamma_j$. Hence, $b_1(G)$ is equal to the number of vertices of $\Gamma$ and $b_2(G)$ is equal to the number of edges. In more generality, a subgraph of order $k$, known as a $k$-clique, represents the generators of $H^k(G)$ and these arise as cup products of $k$ generators of the first cohomology group of $G$. The $k$th Betti number $b_k(G)$ equals the number of $k$-cliques. Regarding the fourth cohomology group, a 4-clique of $\Gamma$ represents the generators of $H^4(G)$, which arise as cup products $\gamma_{i_1i_2i_3i_4}:= \gamma_{i_1}\cup\gamma_{i_2}\cup \gamma_{i_3}\cup \gamma_{i_4}$ and a non-zero element $g_{i_1i_2i_3i_4}\neq 0 \in H_4(G)$ satisfies $\langle g_{i_1i_2i_3i_4}, \gamma_{i_1i_2i_3i_4}\rangle = \delta_{l_1, l_2}$; interesting examples of graphs can be found in a paper of Hildum \cite{[Hildum]}. 

Given an essential 4-manifold whose fundamental group $G$ is a right-angled Artin group, we have that $H_4(G)\neq 0$ and there is a 4-clique in the defining graph $\Gamma$. This implies that we can choose elements $\gamma_1, \gamma_2\in H^1(G)$ that correspond to commuting generators of (\ref{RAAG Presentation}) such that cap product $\gamma_1\cap \gamma_2\cap f_\ast([M])$ is equal to a non-zero cup product of elements $\gamma_i, \gamma_j\in H^1(G)$ for $f_\ast([M])\neq 0 \in H_4(G)$; cf. \cite[Example 2.2]{[Schick]}. We conclude that a right-angled Artin group $G$ with $H_4(G)\neq 0$ satisfies the NPSC property of Definition \ref{Definition Property NPSC}. 

The summarize this discussion as a lemma.\begin{lemma} An essential 4-manifold whose fundamental group is a right-angled Artin group does not admit a Riemannian metric of positive scalar curvature.

\end{lemma}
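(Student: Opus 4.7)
The plan is to show that this lemma is essentially a direct corollary of the preceding discussion in Example \ref{Example RAAGs} combined with Proposition \ref{Proposition NPSC}. No new technical ingredients are needed; the argument amounts to verifying the hypotheses of the NPSC machinery and then invoking Schick's iteration of the Schoen-Yau stable minimal hypersurface technique.

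First I would unpack the essentiality hypothesis. If $M$ is an essential 4-manifold with reference map $f:M\rightarrow BG$, then by definition $f_\ast([M])\neq 0\in H_4(G)$. In particular $H_4(G)\neq 0$, so the defining graph $\Gamma$ of the RAAG $G$ must contain at least one 4-clique, by the Charney-Davis description of $H^\ast(G)$ via the Salvetti complex recalled above.

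Next I would verify Property NPSC of Definition \ref{Definition Property NPSC} for $G$. Since $H_4(G)\neq 0$, the RAAG presentation (\ref{RAAG Presentation}) has at least four pairwise commuting generators, giving $b_1(G)\geq 4$ and in particular $H^1(G;\Z)=\Z^s$ with $s\geq 2$. Choosing a 4-clique corresponding to commuting generators $g_{i_1},g_{i_2},g_{i_3},g_{i_4}$ and writing $\gamma_{i_j}\in H^1(G)$ for the dual classes, the cup product $\gamma_{i_1}\cup\gamma_{i_2}\cup\gamma_{i_3}\cup\gamma_{i_4}$ pairs non-trivially with the generator $g_{i_1i_2i_3i_4}\in H_4(G)$, and hence pairs non-trivially with some non-zero multiple of $f_\ast([M])$ after choosing coordinates. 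Taking $\theta_1=\gamma_{i_1}$ and $\theta_2=\gamma_{i_2}$, this means $\theta_1\cap \theta_2\cap f_\ast([M])$ equals (a multiple of) the non-zero class dual to $\gamma_{i_3}\cup\gamma_{i_4}$ in $H_2(BG;\Z)$, which is exactly the condition (\ref{Element}).

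Finally, I would conclude by invoking Proposition \ref{Proposition NPSC}: since $M$ is essential, $\pi_1(M)=G$ satisfies Property NPSC, and $f_\ast([M])\neq 0\in H_4(G)$, the proposition asserts that $M$ admits no Riemannian metric of positive scalar curvature. Under the hood this is Schick's iterated application of Theorem \ref{Theorem Schick}: starting from a hypothetical non-zero class in $H_4^+(BG)$, two cap products with $\theta_1,\theta_2$ would produce a non-zero class in $H_2^+(BG)$, but the only closed orientable surface carrying a PSC metric is $S^2$, whose fundamental class maps trivially into $H_2(BG)$ because $\pi_2(BG)=0$. The main conceptual point, rather than any obstacle, is the clean translation from the combinatorial fact ``$\Gamma$ has a 4-clique'' to the geometric obstruction via Property NPSC.
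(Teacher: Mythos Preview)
Your proposal is correct and follows essentially the same approach as the paper: the lemma is stated as a summary of Example \ref{Example RAAGs}, and the paper's argument is precisely what you outline --- use the Charney--Davis description of $H^\ast(G)$ to show that a 4-clique in $\Gamma$ (forced by $H_4(G)\neq 0$) yields generators $\gamma_1,\gamma_2\in H^1(G)$ with $\gamma_1\cap\gamma_2\cap f_\ast([M])\neq 0$, establishing Property NPSC, and then invoke Proposition \ref{Proposition NPSC}/Theorem \ref{Theorem Schick}.
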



\end{example}

Chodosh-Li have shown that a closed aspherical 4-manifold does not admit a Riemannian metric of positive scalar curvature \cite[Theorem 2]{[ChodoshLi]} following a strategy of Schoen-Yau \cite[Theorem 6]{[SchoenYau2]}. Gromov has recently shown that an essential 5-manifold whose fundamental group is isomorphic to the fundamental group of a closed aspherical 5-manifold does not admit a Riemannian metric of positive scalar curvature. We conclude this section with the observation that Gromov's five-dimensional result has the following implication regarding essential 4-manifolds; cf. Chodosh-Li \cite[Theorem 2]{[ChodoshLi]}.

\begin{lemma}\label{Lemma Gromov}Let $M$ be an essential 4-manifold whose fundamental group is isomorphic to the fundamental group of a closed orientable aspherical 4-manifold $M_0$. There is no Riemannian metric on $M$ of positive scalar curvature.

\end{lemma}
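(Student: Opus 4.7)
The plan is to reduce the four-dimensional statement to Gromov's five-dimensional result by crossing with a circle. Suppose, for contradiction, that $M$ carries a metric $g$ with $\Scal_g > 0$. Equip $S^1$ with its standard flat metric $h$; since scalar curvature is additive for Riemannian products, the product metric $g\times h$ on $M\times S^1$ also has strictly positive scalar curvature. Thus it suffices to show that the 5-manifold $M\times S^1$ cannot carry a Riemannian metric of positive scalar curvature.

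To apply Gromov's theorem, I would verify two things about $M\times S^1$: (i) its fundamental group coincides with that of a closed orientable aspherical 5-manifold, and (ii) it is essential. For (i), note that $M_0\times S^1$ is a closed orientable 5-manifold, and its universal cover $\widetilde{M_0}\times \R$ is contractible because $M_0$ is aspherical; hence $M_0\times S^1$ is aspherical with $\pi_1(M_0\times S^1)\cong \pi_1(M_0)\times \Z \cong \pi_1(M)\times \Z \cong \pi_1(M\times S^1)$.

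For (ii), let $f:M\rightarrow B\pi_1(M)$ be a classifying map for $M$. Then $F:=f\times \id_{S^1}:M\times S^1\rightarrow B\pi_1(M)\times S^1$ classifies the universal cover of $M\times S^1$ (using $S^1 = B\Z$). The Kunneth decomposition gives
\begin{equation}
H_5(B\pi_1(M)\times S^1;\Q) \cong H_4(B\pi_1(M);\Q)\otimes H_1(S^1;\Q) \oplus H_5(B\pi_1(M);\Q),
\end{equation}
and under $F_\ast$ the fundamental class $[M\times S^1]$ maps to $f_\ast([M])\otimes [S^1]$ in the first summand. Since $M$ is essential, $f_\ast([M])\neq 0$, so $F_\ast([M\times S^1])\neq 0$ and $M\times S^1$ is an essential 5-manifold.

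With (i) and (ii) in hand, Gromov's theorem on essential 5-manifolds whose fundamental group is that of a closed aspherical 5-manifold forbids $M\times S^1$ from admitting a metric of positive scalar curvature, contradicting the product construction above. The only non-routine point is verifying that the hypotheses of Gromov's theorem are satisfied exactly as stated; the asphericity of the product $M_0\times S^1$ and the Kunneth computation for essentiality are the two checks to carry out carefully, and both go through directly.
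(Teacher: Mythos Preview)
Your proposal is correct and follows essentially the same route as the paper: both argue by contradiction, cross with $S^1$ to obtain a positive-scalar-curvature 5-manifold, and then invoke Gromov's five-dimensional result via the aspherical target $M_0\times S^1$. The only cosmetic difference is that the paper phrases the obstruction as the existence of a non-zero degree map $M\times S^1\to M_0\times S^1$, whereas you unpack this as essentiality of $M\times S^1$ via the K\"unneth formula; since $M_0\times S^1$ is itself a model for $B(\pi_1(M)\times\Z)$, these are equivalent formulations.
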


\begin{proof}We proceed by contradiction and suppose that there is a Riemannian metric $(M, g)$ with scalar curvature $\Scal_g > 0$. This assumption implies that the product Riemannian 5-manifold $(M\times S^1, g + d\theta^2)$ has positive scalar curvature, and there is a non-zero degree map $f:M\times S^1\rightarrow M_0\times S^1$ to the aspherical 5-manifold $M_0\times S^1$. This is a contradiction by \cite[Main Theorem]{[Gromov]}. We conclude that $M$ does not admit a Riemannian metric of positive scalar curvature. 

\end{proof}

\section{Cut-and-Paste constructions, positive scalar curvature and prescribed fundamental group}\label{Section Constructions}

The production of 4-manifolds that realize the invariant $q_{> 0}(G)$ and that fully determine the set (\ref{Set PSC}) is carried out by employing the following fundamental result.

\begin{theorem}\label{Theorem PSC} Gromov-Lawson \cite[Theorem A]{[GromovLawson1]}, Schoen-Yau \cite[Section 2]{[SchoenYau]}. Let $(M, g)$ be a compact Riemannian manifold with $\Scal_g > 0$. Any manifold that is obtained from $M$ by performing surgeries in codimension at least three also admits a Riemannian metric with positive scalar curvature. 
\end{theorem}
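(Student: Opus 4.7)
\medskip

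\noindent \textbf{Proof proposal.} The plan is to reduce the statement to a purely local problem around the surgery sphere, construct a deformation of the metric in a tubular neighborhood that preserves positivity of the scalar curvature, and then glue in a standard model handle that also carries a positive scalar curvature metric. By induction on the number of surgeries it suffices to handle a single surgery: let $S^{p}\hookrightarrow M^{n}$ be an embedded sphere with trivial normal bundle and codimension $k:=n-p\geq 3$. The surgery removes a tubular neighborhood $\nu(S^{p})\cong S^{p}\times D^{k}$ and replaces it with $D^{p+1}\times S^{k-1}$, glued along the common boundary $S^{p}\times S^{k-1}$. The goal is to produce, after possibly rescaling, a metric on the resulting manifold that agrees with $g$ outside $\nu(S^{p})$ and still has $\Scal>0$.

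The key step is to modify $g$ inside $\nu(S^{p})$ so that, near the boundary of a slightly shrunken tube, it becomes an isometric product $g|_{S^{p}}+r_{0}^{2}\, ds_{S^{k-1}}^{2}+dt^{2}$ for some small $r_{0}>0$. To do this I would introduce Fermi coordinates $(x,r,\omega)\in S^{p}\times [0,\epsilon)\times S^{k-1}$ in which the metric is a small perturbation of $g|_{S^{p}}+dr^{2}+r^{2}ds_{S^{k-1}}^{2}$. In the $(r,t)$-half-plane pick a smooth curve $\gamma$ that starts along the $r$-axis at radius $\epsilon$, bends inward, and ends running parallel to the $t$-axis at height $r=r_{0}$. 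Rotating $\gamma$ by the $S^{k-1}$-action and pushing forward the metric yields a hypersurface of revolution whose induced metric is, to leading order, the warped product $g|_{S^{p}}+dt^{2}+f(t)^{2}ds_{S^{k-1}}^{2}$, where $f(t)$ is the $r$-coordinate of $\gamma$ at arclength parameter $t$.

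The core calculation is the scalar curvature formula for such a warped product, which schematically reads
\begin{equation}
\Scal = \Scal_{g|_{S^{p}}} \; +\; \frac{(k-1)(k-2)}{f^{2}}\; -\; 2(k-1)\frac{f''}{f}\; -\; (k-1)(k-2)\frac{(f')^{2}}{f^{2}}\; +\; O(1),
\end{equation}
plus correction terms coming from the fact that the ambient metric is only approximately the model. The decisive point is that the codimension hypothesis $k\geq 3$ makes the intrinsic term $(k-1)(k-2)/f^{2}$ strictly positive and, for $f$ small, dominant. By choosing $\gamma$ to bend slowly and with small curvature (so $|f''|$ stays controlled) while $f$ decreases from $\epsilon$ to $r_{0}$, one can arrange that the positive intrinsic contribution absorbs the bending-induced terms $-2(k-1)f''/f$ and the perturbation coming from the deviation of $g$ from the model. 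The hard part is this estimate: one must choose the profile $\gamma$ carefully (essentially by a quantitative ODE argument, pasting a short arc of large radius bending, a straight segment, and a matched transition), and then uniformly control the perturbation terms in terms of the $C^{2}$-norm of the ambient curvature of $g$. I expect this to be the main obstacle, as in the original arguments.

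Once the metric is an exact product $g|_{S^{p}}+r_{0}^{2}ds_{S^{k-1}}^{2}+dt^{2}$ near the inner boundary, the final step is to cap it off with the handle $D^{p+1}\times S^{k-1}$ carrying a suitable product-type metric whose radial profile on the $D^{p+1}$-factor is a rotationally symmetric metric of positive scalar curvature matching the boundary isometrically (this uses again $k-1\geq 2$, so that the $S^{k-1}$-factor contributes positive scalar curvature, and the profile can be chosen to be a torpedo-type function). The glued metric is smooth, agrees with $g$ outside the original tube, and has $\Scal>0$ everywhere, which completes the construction.
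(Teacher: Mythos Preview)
The paper does not supply a proof of this theorem: it is quoted as a fundamental input due to Gromov--Lawson and Schoen--Yau, and is used throughout as a black box (for connected sums and for the codimension $n-1\geq 3$ gluing in the construction of $N^n(G)$). So there is no ``paper's own proof'' to compare against.

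Your sketch is the standard Gromov--Lawson bending argument and is correct in outline: localize to a tubular neighborhood, deform via a curve $\gamma$ in the $(r,t)$-half-plane so that near the inner boundary the metric becomes an exact Riemannian product on $S^{p}\times S^{k-1}\times I$, use that for $k\geq 3$ the intrinsic curvature $(k-1)(k-2)/f^{2}$ of the $S^{k-1}$-factor dominates the bending and perturbation terms when $f$ is small, and then cap with a torpedo handle $D^{p+1}\times S^{k-1}$. The places you correctly flag as delicate---the quantitative choice of the profile $\gamma$ and the uniform control of the error terms coming from the deviation of $g$ from the model metric in Fermi coordinates---are exactly where the work lies in the original proofs; your proposal does not carry these out, but identifies them accurately. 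If you wanted to complete the argument you would need to make the ODE/curve construction precise (e.g.\ as in Gromov--Lawson's original paper or the streamlined accounts of Rosenberg--Stolz or Walsh), and to verify that the error terms are $O(r)$ in Fermi coordinates so that the $1/f^{2}$ term wins once $f$ is sufficiently small.
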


Carr has shown that the fundamental group is not an obstruction for the existence of a Riemannian metric of positive scalar curvature for orientable manifolds of dimension four \cite[Corollary 2]{[Carr]}. His proof consists of constructing a manifold with a Riemannian metric of positive scalar curvature and prescribed finitely presented fundamental group as the boundary of a 2-complex embedded in $\R^5$. A different proof of Carr's result follows from the work of Kervaire \cite{[Kervaire]} and Theorem \ref{Theorem PSC}. We state a strengthened version of Carr's result which include several topological properties of the manifolds involved that were not considered in \cite{[Carr]}. 

\begin{theorem}\label{Theorem Carr} Let $G$ be a group with finite presentation\begin{equation}\label{Group}\left\langle g_1, \ldots, g_s : r_1, \ldots, r_t\right\rangle \end{equation} and suppose $n\geq 4$. There exists a closed smooth stably-parallelizable $n$-manifold $M^n(G)$ such that\begin{itemize}\item the fundamental group of $M^n(G)$ is isomorphic to $G$;
\item if $n$ is even, the Euler characteristic is $\chi(M^n(G)) = 2 - 2\Def(G)$ for $\Def(G) = s - t$,
\item there exists a Riemannian metric $(M^n(G), h)$ of positive scalar curvature $\Scal_h > 0$, and
\item the classifying map $f:M^n(G)\rightarrow BG$ satisfies\begin{equation}\label{ClassifyMap}f_{\ast}([M^n(G)]) = 0 \in H_n(BG).\end{equation} 
\end{itemize}
\end{theorem}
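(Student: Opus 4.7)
The plan is to realize $M^n(G)$ by Kervaire's classical surgery construction and then read off the required properties. Begin with the starting manifold $M_0 = \#_s (S^1 \times S^{n-1})$. Its fundamental group is the free group $F_s$ on the generators $g_1,\ldots,g_s$, and it is stably parallelizable as a connected sum of stably parallelizable manifolds. Equip each factor $S^1\times S^{n-1}$ with the product of the standard metric on $S^1$ and the round metric on $S^{n-1}$; since $n\geq 4$, the $S^{n-1}$-factor has positive scalar curvature, so each factor is PSC, and the Gromov--Lawson/Schoen--Yau connected sum theorem (Theorem \ref{Theorem PSC}) endows $M_0$ with a PSC metric. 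When $n$ is even, $\chi(S^1\times S^{n-1})=0$, and the connected sum formula gives $\chi(M_0)=2-2s$.

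Next I would impose the relations by surgery. Represent each relator $r_j$ by a smoothly embedded circle $\gamma_j\subset M_0$; these circles may be taken pairwise disjoint because $n\geq 4$ (general position suffices in codimension $\geq 3$). Each $\gamma_j$ has trivial normal bundle, so a framed $1$-surgery is available, and I would choose the framing compatible with a stable parallelization of $M_0$ so that the outcome remains stably parallelizable. Performing this surgery on all $\gamma_j$ produces $M^n(G)$. Since the codimension of each $\gamma_j$ is $n-1\geq 3$, Theorem \ref{Theorem PSC} supplies a PSC metric $h$ on $M^n(G)$. Van Kampen's theorem applied to the surgered neighbourhoods shows $\pi_1(M^n(G))=F_s/\langle r_1,\ldots,r_t\rangle=G$. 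Finally, viewing the $j$-th surgery as a $(n{+}1)$-dimensional $2$-handle cobordism $W_j$ from $M_{j-1}$ to $M_j$, the relation $2\chi(W_j)=\chi(\partial W_j)$ (valid since $\dim W_j$ is odd when $n$ is even) combined with $\chi(W_j)=\chi(M_{j-1})+1$ yields $\chi(M_j)=\chi(M_{j-1})+2$. Iterating gives $\chi(M^n(G))=2-2s+2t=2-2\operatorname{Def}(G)$.

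For the vanishing of the classifying map on the fundamental class, I would exhibit a compact $(n{+}1)$-manifold $V$ with $\partial V=M^n(G)$ whose inclusion realises the reference map. Let $V_0=\natural_s(S^1\times D^n)$, so $\partial V_0=M_0$ and $\pi_1(V_0)=F_s$. Attach $n$-dimensional $2$-handles to $V_0$ along the circles $\gamma_j\subset \partial V_0=M_0$ using the same framings chosen above. The resulting manifold $V$ satisfies $\partial V=M^n(G)$ (this is the trace-of-surgery description of $M^n(G)$) and $\pi_1(V)=F_s/\langle r_1,\ldots,r_t\rangle=G$. Standard obstruction theory—vanishing of $\pi_k(BG)$ for $k\geq 2$—extends the reference map $f\colon M^n(G)\to BG$ to a map $F\colon V\to BG$. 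Therefore $f_{\ast}([M^n(G)])$ is the image of $[\partial V]$ in $H_n(BG)$ under $F_{\ast}$ and hence vanishes.

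The main technical point I would expect to have to justify carefully is preservation of stable parallelizability under the surgeries: this requires that each $\gamma_j$'s stable normal framing extend across the $2$-handle, which holds because circles in an orientable manifold of dimension $\geq 3$ have trivial normal bundle and because the stable tangent bundle of $D^2\times S^{n-2}$ is trivial. Everything else (fundamental group, Euler characteristic, PSC, and the bounding argument for $f_{\ast}[M]=0$) follows routinely from the surgery description together with Theorem \ref{Theorem PSC}.
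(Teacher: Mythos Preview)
Your proposal is correct and follows essentially the same Kervaire-type surgery construction as the paper: start with $\#_s(S^1\times S^{n-1})$, perform framed $1$-surgeries along disjoint circles representing the relators, and invoke Theorem~\ref{Theorem PSC} for the PSC metric. The only cosmetic differences are that the paper computes the Euler characteristic via Mayer--Vietoris rather than your cobordism identity, and the paper leaves the null-bordism argument for $f_\ast[M^n(G)]=0$ implicit (mentioning only that the construction comes with a null bordism), whereas you spell it out explicitly via $V=\natural_s(S^1\times D^n)\cup\text{2-handles}$.
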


Both Carr's and Kervaire's constructions produce a closed stably parallelizable n-manifold equipped with a null bordism along with the classifying map (\ref{ClassifyMap}). Such null bordism is the culprit for the existence of a metric of positive scalar curvature in terms of Theorem \ref{Theorem GromovLawson}. Moreover, although the Euler class of $M^n(G)$ need not be zero, other characteristic numbers are trivial such as the signature of the 4-manifold $M^4(G)$. This 4-manifold realizes the modified Hausmann-Weinberger invariant $q_{> 0}(G)$ in Theorem \ref{Theorem A} and Theorem \ref{Theorem B} under the assumption that the relation $b_1(G) - b_2(G) = \Def(G)$ holds.

We comment on the proof of Theorem \ref{Theorem Carr} in Remark \ref{Remark Carr}, but we point out first that Theorem \ref{Theorem PSC} makes it possible to extend Carr's result to non-orientable manifolds. 

\begin{theorem}\label{Theorem NonOrientable Carr} Let $G$ be a group with finite presentation\begin{equation}\label{Group}\left\langle g_1, \ldots, g_s : r_1, \ldots, r_t\right\rangle \end{equation} that contains a subgroup of index two, and suppose $n\geq 4$.  There exists a closed smooth non-orientable  $n$-manifold $N^n(G)$ such that\begin{itemize}\item the fundamental group of $N^n(G)$ is isomorphic to $G$;
\item if $n$ is even, the Euler characteristic is $\chi(M^n(G)) = 2 - 2\Def(G)$ for $\Def(G) = s - t$,
\item there exists a Riemannian metric $(N^n(G), h)$ of positive scalar curvature $\Scal_h > 0$, and
\item the classifying map $f:M^n(\tilde{G})\rightarrow B\tilde{G}$ of the the orientation 2-cover $M^n(\tilde{G})\rightarrow N^n(G)$ satisfies\begin{equation}\label{ClassifyMap}f_{\ast}([M^n(\tilde{G})]) = 0 \in H_n(B\tilde{G}).\end{equation} 
\end{itemize}
\end{theorem}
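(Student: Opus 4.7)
The plan is to build $N^n(G)$ directly by a Kervaire-Carr handlebody construction in which some 1-handles are attached with an orientation-reversing twist determined by the surjection $w\colon G\twoheadrightarrow \mathbb{Z}/2$ whose kernel is the prescribed index-two subgroup $\tilde{G}$. I start from $S^n$ equipped with its round metric of positive scalar curvature. For each generator $g_i$ in the finite presentation of $G$, I attach a 1-handle $D^1\times D^n$ along a pair of disks $S^0\times D^n$ embedded in the current boundary, choosing the relative framing of the two disks to be orientation-preserving if $w(g_i)=0$ and orientation-reversing if $w(g_i)=1$. For each relator $r_j$, I then attach a 2-handle $D^2\times D^{n-1}$ along an embedded framed circle $\gamma_j$ representing $r_j$; such embeddings exist because $n\geq 4$, and every normal bundle of a circle is trivializable. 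The resulting closed $n$-manifold $N^n(G)$ has $\pi_1(N^n(G))=G$ by van Kampen, first Stiefel-Whitney class $w_1=w$ by construction, and is therefore non-orientable because $w$ is assumed to be surjective.

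Each 1-handle attachment is a 0-surgery of codimension $n\geq 4\geq 3$, and each 2-handle attachment is a 1-surgery of codimension $n-1\geq 3$, so iterated application of Theorem \ref{Theorem PSC} produces a Riemannian metric of positive scalar curvature on $N^n(G)$; the Gromov-Lawson-Schoen-Yau surgery construction is local in a tubular neighborhood of the attaching sphere and is insensitive to whether the global framing is orientation-preserving. For $n$ even, a direct Euler-characteristic bookkeeping gives $\chi(S^n)=2$, each 0-surgery changes $\chi$ by $\chi(S^{n-1})-\chi(S^0)=-2$, and each 1-surgery changes $\chi$ by $\chi(S^{n-2})-\chi(S^1)=2$, yielding $\chi(N^n(G))=2-2s+2t=2-2\Def(G)$, independently of the framing choices.

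The same handles packaged as an $(n+1)$-dimensional cobordism give $W^{n+1}=(S^n\times I)\cup(\text{handles attached to } S^n\times\{1\})$ with $\pi_1(W)=G$ and $w_1(W)$ restricting to $w$ on the $N^n(G)$-end of the boundary and vanishing on the $S^n$-end. The orientation double cover $\tilde{W}\to W$ is then an orientable $(n+1)$-manifold with $\partial \tilde{W}=(S^n\sqcup S^n)\sqcup M^n(\tilde{G})$, where $M^n(\tilde{G})\to N^n(G)$ is the orientation double cover and has fundamental group $\tilde{G}$. The classifying map $W\to BG$ lifts to $\tilde{W}\to B\tilde{G}$; it restricts to the classifying map $f\colon M^n(\tilde{G})\to B\tilde{G}$ on one boundary piece and to a nullhomotopic map on $S^n\sqcup S^n$, since $S^n$ is simply connected and $B\tilde{G}$ is aspherical for $n\geq 2$. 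This oriented null-bordism of $M^n(\tilde{G})$ in $B\tilde{G}$ forces $f_{\ast}([M^n(\tilde{G})])=0\in H_n(B\tilde{G})$. The main technical point I expect to have to justify is that Theorem \ref{Theorem PSC} applies verbatim to orientation-reversing handle attachments; the resolution is that the Gromov-Lawson surgery metric is assembled pointwise from local data in a tubular neighborhood, so orientability of the ambient framing plays no role in the construction.
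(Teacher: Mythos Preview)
Your construction is correct and is essentially the paper's proof in handlebody language: the paper starts from the connected sum $X(F_s)=S^{n-1}\widetilde{\times}S^1\#(s-1)(S^1\times S^{n-1})$ and performs $1$-surgeries along loops representing the relators, whereas you obtain the same $X(F_s)$ (up to allowing more than one twisted summand) by $0$-surgeries on $S^n$ and then do the same $1$-surgeries. The Euler characteristic count, the appeal to Theorem~\ref{Theorem PSC}, and the van Kampen computation are identical in substance. Your bordism argument for $f_\ast([M^n(\tilde G)])=0$ via the orientation double cover of the trace $W^{n+1}$ is correct and in fact more explicit than the paper's proof, which does not treat that bullet separately.

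One point to fix: the sentence ``every normal bundle of a circle is trivializable'' is false in a non-orientable ambient manifold; a loop $\gamma$ has trivial normal bundle if and only if $w_1|_\gamma=0$. What actually makes your $2$-handle attachments go through is that you have arranged $w_1$ of the intermediate manifold to equal the chosen surjection $w$, and since $w$ factors through $G$ each relator loop satisfies $w(\gamma_j)=w(r_j)=0$. This is precisely the content of the paper's Lemma~\ref{Lemma Tubular NbhD} and the sentence ``Since it was assumed that $G$ contains a subgroup of index two, we can choose the loops \ldots\ to have trivial orientation character''; you should replace your blanket trivializability claim with this observation.
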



The total space of the non-orientable $S^{n - 1}$-bundle over the circle is denoted by $S^{n - 1}\widetilde{\times} S^1$.

\begin{proof} The first step is to construct a closed non-orientable Riemannian $n$-manifold $(X(F_s), g_0)$ with positive scalar curvature $\Scal_{g_0} > 0$ and a free fundamental group $\pi_1(X(F_s))$ on $s$ generators, where $s$ is the number of generators in the presentation (\ref{Group}) of $G$. The fundamental group of the connected sum\begin{equation}\label{Connected Sum Step 1}X(F_s):= S^{n - 1}\widetilde{\times} S^1 \# S^1\times S^{n - 1}\# \cdots \# S^1\times S^{n - 1}\end{equation} of the non-orientable $(n - 1)$-sphere bundle with $s - 1$ copies of $S^1\times S^{n - 1}$ is the free group $\pi_1(X(F_s)) = F_s = \langle g_1, \ldots, g_s\rangle$ on $s$ generators. The claim that the fundamental group of $X_s$ is the free group on $s$ generators follows from the Seifert-van Kampen theorem under the assumption that $n\geq 3$. The hypothesis on the existence of a subgroup of index two of $G$ implies that $s\in \N$. The fixed-point free orientation-reversing involution $S^1\times S^{n - 1}\rightarrow S^1\times S^{n - 1}$ that is given by $(\exp 2\pi i \theta, x)\mapsto (-\exp 2 \pi i \theta, -x)$ is an isometry of the Riemannian product of canonical metrics $(S^1\times S^{n - 1}, d\theta^2 + g_{S^{n - 1}})$, and the scalar curvature of the latter is positive. We equip $S^{n - 1}\widetilde{\times} S^1$ with the quotient metric, which has positive scalar curvature. Theorem \ref{Theorem PSC} implies that there is a Riemannian metric of positive scalar curvature on the connected sum (\ref{Connected Sum Step 1}) for every $s$. 
 
For each relation $r_j$ in the group presentation (\ref{Group}), take an embedded loop $\gamma_j\subset X(F_s)$ that represents the homotopy class that corresponds to $r_j$. This yields a set of embedded loops\begin{equation}\label{Set Loops}\{\gamma_1, \ldots, \gamma_r\}\end{equation} along a one-to-one assigment with the set $\{r_1, \ldots, r_t\}$ for relations in  (\ref{Group}).  Since an embedded loop has codimension at least two in the $n$-manifold (\ref{Connected Sum Step 1}), transversality guarantees that the set (\ref{Set Loops}) consists of pairwise disjoint embedded loops $\gamma_j \subset X(F_s)$ for $j = 1, \ldots, t$. The possible diffeomorphism type of the tubular neighborhoods of embedded loops in an $n$-manifold $M$ are collected in the following lemma; see  \cite[Chapter 5]{[DavisKirk]} for a definition of the orientation character homomorphism $\rho_M: \pi_1(M)\rightarrow \Z/2$.
 
 \begin{lemma}\label{Lemma Tubular NbhD} Let $M$ be a smooth $n$-manifold and $\gamma\subset M$ be an embedded loop. 
 
If the orientation character of $\gamma$ is zero, the tubular neighborhood $\nu(\gamma)\subset M$ is diffeomorphic to $S^1\times D^{n - 1}$ .
 
If the orientation character of $\gamma$ is non-zero, the tubular neighborhood $\nu(\gamma)\subset M$ is diffeomorphic to $D^{n - 1}\widetilde{\times} S^1$.
 \end{lemma}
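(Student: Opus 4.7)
The plan is to combine the tubular neighborhood theorem with the classification of real vector bundles over the circle. First I would use the tubular neighborhood theorem to identify $\nu(\gamma)$ diffeomorphically with the closed unit disk bundle of the normal bundle $E\rightarrow \gamma$, which is a rank $(n-1)$ real vector bundle over $\gamma\cong S^1$. This reduces the lemma to classifying such bundles and matching them against the two possible values of the orientation character.

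The second step is the classification itself. Isomorphism classes of rank $k$ real vector bundles over $S^1$ are in bijection with $[S^1, BO(k)] = \pi_1(BO(k)) = \pi_0(O(k)) = \Z/2$, so there are exactly two possibilities for $E$: the trivial bundle and the non-trivial clutched bundle. Passing to associated closed disk bundles yields $S^1\times D^{n-1}$ in the first case and $D^{n-1}\widetilde{\times} S^1$ in the second, matching the two candidate diffeomorphism types in the statement.

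The third step matches the two cases with $\rho_M(\gamma)$. Along $\gamma$ the tangent bundle of $M$ splits as $TM|_\gamma = T\gamma\oplus E$, and because $T\gamma = TS^1$ is trivial we have $w_1(TM|_\gamma) = w_1(E)$. By definition $\rho_M(\gamma)$ records whether $TM|_\gamma$ is orientable as a bundle over the loop, so $\rho_M(\gamma) = 0$ iff $w_1(E) = 0$. Since a rank $(n-1)$ vector bundle over $S^1$ is trivial iff it is orientable, $\rho_M(\gamma) = 0$ forces $E$ to be trivial, while $\rho_M(\gamma)\neq 0$ forces $E$ to be the non-trivial bundle, yielding the dichotomy asserted in the lemma.

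The argument is essentially routine; the serious inputs are the tubular neighborhood theorem and the identification $\pi_0(O(n-1)) = \Z/2$. The only step requiring a small amount of care is reconciling the definition of the orientation character from \cite{[DavisKirk]} with the orientability of $TM|_\gamma$, which is a standard unwinding of definitions and poses no real obstacle. Note in particular that the dimension hypothesis $n\geq 4$ is not needed here: the proof works verbatim for any $n\geq 2$.
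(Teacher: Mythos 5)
Your argument is correct and is the standard one: the paper itself states this lemma without proof (it only cites Davis--Kirk for the definition of the orientation character), so your proposal simply supplies the routine justification via the tubular neighborhood theorem, the identification of rank-$(n-1)$ bundles over $S^1$ with $\pi_0(O(n-1))\cong\Z/2$, and the matching $w_1(TM|_\gamma)=w_1(E)$. Your observation that the statement needs no dimension restriction is also accurate.
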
 
 
The second step is as follows. We carve out all disjoint tubular neighborhoods of the loops (\ref{Set Loops}) to obtain a codimension zero compact submanifold $X_s\subset X(F_s)$ with $t$ boundary components. Since it was assumed that $G$ contains a subgroup of index two, we can choose the loops (\ref{Set Loops}) to have trivial orientation character. The boundary of the compact $n$-manifold $X_s$ is given by the disjoint union of $t$ copies of\begin{equation}\partial(\nu(\gamma_ i)) = \partial (S^1\times D^{n - 1}) = S^1\times S^{n - 2}\end{equation} by Lemma \ref{Lemma Tubular NbhD}. The inclusion\begin{equation}\label{Inclusion Map}X_s:= X(F_s)\backslash \overset{t}{\underset{j = 1}{\bigsqcup}} \nu(\gamma_j)\hookrightarrow X(F_s)\end{equation} induces a group homomorphism\begin{equation}\label{Group Epi}\pi_1(X_s)\rightarrow \pi_1(X(F_s))\end{equation}that is surjective since the tubular neighborhoods are disjoint. We now argue that the group epimorphism (\ref{Group Epi}) is also a monomorphism provided $n\geq 4$. Indeed, suppose a given loop $\gamma_{i_0}\subset X_s$ bounds a 2-disk that is contained in $X(F_s)$. By a transversality argument, we can push such 2-disk away from all the other loops in (\ref{Set Loops}) since $n > 2 + 1$. Thus, the loop $\gamma_{i_0}$ bounds a 2-disk in the manifold $X_s$ and it is a trivial element in the fundamental group $\pi_1(X_s)$. Thus, we conclude that (\ref{Group Epi}) is a group isomorphism. 

The third step is to cap off each boundary component of $X_s$ with a copy of $D^2\times S^{n - 2}$ in order to construct the closed smooth non-orientable $n$-manifold\begin{equation}\label{n1manifold}N^n(G):= \Big(X(F_s)\backslash \overset{t}{\underset{j = 1}{\bigsqcup}} \nu(\gamma_j)\Big)\bigcup \Big(\overset{t}{\underset{j = 1}{\bigsqcup}} (D_j^2\times S^{n - 2})\Big).\end{equation}The Seifert-van Kampen theorem is used to conclude that the manifold $N^n(G)$ has fundamental group isomorphic to (\ref{Group}). If $n$ is even, we use a standard homological computation using a Mayer-Vietoris sequence to conclude that the Euler characteristic is $\chi(N^n(G)) = 2 - 2\Def(G)$ for the deficiency $\Def(G) = s - t$ of the presentation (\ref{Group}) of the group $G$.

The fourth and last step is to appeal to Theorem \ref{Theorem PSC} to conclude that there is a Riemannian metric $(N^n(G), h)$ with scalar curvature $\Scal_h > 0$. The Riemannian product  $(D^2\times S^{n - 2}, g_{D^2} + g_{S^{n - 2}})$ of a round hemisphere with a round $(n - 2)$-sphere has positive scalar curvature and the gluing construction (\ref{n1manifold}) of $N^n(G)$ is along submanifolds of codimension $n - 1\geq 3$. Theorem \ref{Theorem PSC} guarantees the existence of a Riemannian metric $(N^n(G), h)$ of positive scalar curvature $\Scal_h > 0$.


\end{proof}

\begin{remark}\label{Remark Carr}A modification to the first step in the proof of Theorem \ref{Theorem NonOrientable Carr} yields a proof of Theorem \ref{Theorem Carr}. We substitute the closed 4-manifold (\ref{Connected Sum Step 1}) with the closed orientable 4-manifold\begin{equation}X(F_s) = S^1\times S^{n - 1}\#\cdots \#S^1\times S^{n - 1}\end{equation}given by the connected sum of $s\in \Z_{\geq 0}$ copies of $S^1\times S^{n - 1}$. The remaining steps in the proof of Theorem \ref{Theorem NonOrientable Carr} are performed verbatim, and a proof of Theorem \ref{Theorem Carr} is obtained; cf. \cite{[Kervaire]}. 


\end{remark}

\section{Proofs}

\subsection{Proof of Theorem \ref{Theorem A}}We show that $\mathcal{G}^2_{> 0}(G) = \mathcal{G}^2_{> 0}(G, 0)$, and then proceed to compute the latter set as it was done in Example \ref{Geography Surface Groups}. As it was discussed in Section \ref{Section NPositiveScalarCurvature}, Proposition \ref{Proposition Spin Essential} states that the sets $\mathcal{G}^2_{> 0}(G, \alpha)$ are empty for $\alpha \neq 0$ appealing to work of Rosenberg \cite{[Rosenberg1], [Rosenberg2]}. Let\begin{equation}\label{Original Invariant}q_{\alpha}(G, \sigma):= \Min \{\chi(M): (M, f)\in \mathcal{M}(G, \alpha) \; \text{and} \space \; \sigma(M) = \sigma\}\end{equation}where $\mathcal{M}(G, \alpha)$ is the set of closed smooth oriented 4-manifolds with fundamental group $G$ and $f_\ast([M]) = \alpha\in H_4(G)$. We now recall a well-known proposition from the literature.

\begin{proposition}\label{Proposition Geography Zero}Kirk-Livingston \cite[Proposition 4.1]{[KirkLivingston2]}, Kotschick \cite{[Kotschick]}. 

If $\alpha = 0\in H_4(G)$, then the inequalities\begin{equation}2 - 2(b_1(G) - b_2(G)) + |\sigma|\leq q_{0}(G, \sigma)\leq 2 - 2\Def(G) + |\sigma|\end{equation}hold.
\end{proposition}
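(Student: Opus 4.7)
Proof plan. The bound splits into two pieces: the lower bound comes from the constraint that the vanishing of $f_\ast([M])$ places on the intersection form of $M$, while the upper bound is a direct construction built on Theorem \ref{Theorem Carr} followed by signature-adjusting connected sums.

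For the lower bound, start from the identity $\chi(M) = 2 - 2b_1(M) + b_2(M)$. Since the reference map $f: M \rightarrow BG$ induces an isomorphism $\pi_1(M) \xrightarrow{\cong} G$, abelianizing and tensoring with $\Q$ gives $b_1(M) = b_1(G)$. The remaining task is to prove $b_2(M) \geq 2b_2(G) + |\sigma|$. The Hopf short exact sequence
\[
\pi_2(M) \rightarrow H_2(M; \Q) \rightarrow H_2(G; \Q) \rightarrow 0
\]
makes $f_\ast$ surjective on rational second homology, so by universal coefficients over $\Q$ the dual map $f^\ast: H^2(G; \Q) \rightarrow H^2(M; \Q)$ is injective and its image $V$ has dimension exactly $b_2(G)$. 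The hypothesis $f_\ast([M]) = 0$ forces $V$ to be totally isotropic for the intersection form on $H^2(M; \Q)$, since for any $x, y \in H^2(G; \Q)$
\[
\langle f^\ast x \cup f^\ast y, [M]\rangle = \langle x \cup y, f_\ast([M])\rangle = 0.
\]
Because the intersection form is non-degenerate of signature $\sigma$, any totally isotropic subspace has dimension at most $(b_2(M) - |\sigma|)/2$. The inequality $b_2(G) \leq (b_2(M) - |\sigma|)/2$ then rearranges to $b_2(M) \geq 2b_2(G) + |\sigma|$, which combined with $\chi(M) = 2 - 2b_1(G) + b_2(M)$ yields the claimed lower estimate.

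For the upper bound, invoke Theorem \ref{Theorem Carr} to obtain a closed oriented stably parallelizable 4-manifold $M^4(G)$ with $\pi_1(M^4(G)) \cong G$, Euler characteristic $2 - 2\Def(G)$, signature zero, and classifying map satisfying $f_\ast([M^4(G)]) = 0 \in H_4(G)$. To realize a prescribed signature $\sigma \in \Z$, form the connected sum
\[
M_\sigma := M^4(G) \,\#\, \tfrac{|\sigma| + \sigma}{2}\,\mathbb{CP}^2 \,\#\, \tfrac{|\sigma| - \sigma}{2}\,\overline{\mathbb{CP}^2}.
\]
The simply connected summands leave the fundamental group unchanged and map nullhomotopically into $BG$, so the condition $f_\ast([M_\sigma]) = 0$ is preserved; the Euler characteristic grows by $|\sigma|$ while the signature becomes $\sigma$. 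Hence $q_0(G, \sigma) \leq \chi(M_\sigma) = 2 - 2\Def(G) + |\sigma|$.

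The only non-formal ingredient is the Hopf-sequence input that guarantees the subspace $V = f^\ast H^2(G; \Q)$ attains full dimension $b_2(G)$ inside $H^2(M; \Q)$; once this is in hand, the lower bound is pure linear algebra of symmetric forms, and the upper bound is a routine application of Theorem \ref{Theorem Carr} combined with the standard behavior of $\chi$ and $\sigma$ under connected sum with $\mathbb{CP}^2$ and $\overline{\mathbb{CP}^2}$.
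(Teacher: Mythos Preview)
The paper does not supply its own proof of this proposition; it is recalled from the literature with attribution to Kirk--Livingston and Kotschick and then used as a black box. Your argument is correct and reproduces the standard proof from those references: the lower bound via the isotropic-subspace estimate (Hopf sequence giving $f^\ast$ injective on $H^2(G;\Q)$, the hypothesis $f_\ast([M])=0$ forcing isotropy, and the linear-algebra bound $\dim V \leq (b_2(M)-|\sigma|)/2$), and the upper bound via the Kervaire/Carr construction followed by connected sums with $\mathbb{CP}^2$ or $\overline{\mathbb{CP}^2}$.
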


Proposition \ref{Proposition Geography Zero} implies that the 4-manifold $M^4(G)$ of Theorem \ref{Theorem Carr} realizes the function (\ref{Relation}) by Hypothesis (\ref{Hypothesis 1}). Theorem \ref{Theorem Lichnerowicz} implies that $\sigma(M) = 0$ for every $M\in \mathcal{M}^2_{> 0}(G)$, and we can conclude that $q_{> 0}^2(G) = 2 - 2\Def(G) = q^2_{> 0, 0}(G, 0)$. We now need to determine the values of pairs $\{(\chi(M), 0)\}$ to compute the geography and conclude the proof of the theorem. Consider the product of two round 2-spheres $(S^2\times S^2, g_{S^2} + g_{S^2})$, whose scalar curvature is positive, Theorem \ref{Theorem PSC} implies that the connected sum $M_k:= M^4(G)\# k(S^2\times S^2)$ admits a Riemannian metric of positive scalar curvature for any nonnegative integer number $k$. A Mayer-Vietoris sequence argument shows that Euler characteristic is $\chi(M_k) = 2k - 2\Def(G)$ and we conclude that $\mathcal{G}^2_{> 0}(G) = \{(2 - 2\Def(G) + 2k, 0) : k\in \Z_{\geq 0}\}$.

\hfill $\square$

\subsection{Proof of Theorem \ref{Theorem B}} The proof is similar of Theorem \ref{Theorem A}. Proposition \ref{Proposition NPSC} implies that the sets $\mathcal{G}_{> 0}(G, \alpha)$ are empty for $\alpha \neq 0$ in the case $H_4(G)\neq 0$ and $G$ satisfies Property NPSC or if $G$ is the fundamental group of a closed orientable nonpositively curved Riemannian 4-manifold. If $G$ is the fundamental group of a closed orientable aspherical 4-manifold, the same conclusion holds by Lemma \ref{Lemma Gromov}. Proposition \ref{Proposition Geography Zero} guarantees that the 4-manifold produced in Theorem \ref{Theorem Carr} realizes the modified Hausmann-Weinberger invariant\begin{equation}q_{> 0}(G) = 2 - 2\Def(G) = q_{> 0, 0}(G, 0) = q_{> 0, 0}^0(G, 0).\end{equation}Using Theorem \ref{Theorem PSC} and taking connected sums with copies of $\mathbb{CP}^2$ and/or $\overline{\mathbb{CP}^2}$ we conclude that the equality\begin{equation}q_{> 0}(G, \sigma) = |\sigma| + 2 - 2\Def(G) = q_{> 0, 0}(G, \sigma)\end{equation}holds and $\mathcal{G}_{> 0}(G) = \{(2 - 2\Def(G) + |k|, k) : k\in \Z_{\geq 0}\}$. Analogously, we conclude that $\mathcal{G}_{> 0}^2(G) = \{(2 - 2\Def(G) + 2k, 0) : k\in \Z_{\geq 0}\}$ by invoking Theorem \ref{Theorem Lichnerowicz} and taking connected sums with copies of $S^2\times S^2$ equipped with a metric of positive scalar curvature. 

\hfill $\square$

\subsection{Proof of Proposition \ref{Proposition C}} Notice that the 4-manifold with fundamental group $G_{p, q}$ that is produced by Theorem \ref{Theorem Carr} has Euler characteristic equal to 4. A closed smooth 4-manifold with a Riemannian metric of postive scalar curvature with fundamental group isomorphic to (\ref{Examples Groups}), Euler characteristic equal to 2, signature equal to zero and trivial second Stiefel-Whitney class is constructed as follows. Consider the loop\begin{equation}\label{Loop p}\gamma_p:= S^1\times \{z\}\subset S^1\times L(q, 1)\end{equation}whose homotopy class corresponds to $p$ times the generator of $\pi_1(S^1\times \{z\}) = \Z$; $\{z\}$ is a point in the Lens space $L(q, 1)$ with cyclic fundamental group $\Z/q$. We carve out a tubular neighborhood of (\ref{Loop p}) and cap off the boundary with a copy of $D^2\times S^2$ to obtain a closed smooth orientable 4-manifold\begin{equation}\label{Manifold pq}M_{p, q}:= (S^1\times L(q, 1)\setminus \nu(\gamma_p)) \cup (D^2\times S^2).\end{equation} The Seifert-van Kampen theorem implies that the fundamental group of $M_{p, q}$ is isomorphic to $G_{p, q}$ and Theorem \ref{Theorem GromovLawson} implies that the 4-manifold (\ref{Manifold pq}) admits a metric of positive scalar curvature since the canonical metric $(S^1\times L(p, 1), d\theta^2 + g_{L(p, 1)})$ has nonnegative sectional curvature and positive scalar curvature. Proposition \ref{Proposition Geography Zero} implies $q_{> 0}(G_{p, q}) = 2$, and $M_{p, q}$ realizes this value; cf. \cite[Theorem 2]{[KirkLivingston1]}. Taking connected sums with $\mathbb{CP}^2$ and/or $\overline{\mathbb{CP}^2}$ we conclude $q_{> 0, 0}(G_{p, q}, \sigma) = 2 + |\sigma|$ and $\mathcal{G}_{> 0}(G_{p, q}) = \{(2 + |k|, k): k\in \Z\}$. Analogously, taking connected sums with copies of $S^2\times S^2$, we conclude that $\mathcal{G}_{> 0}^2(G_{p, q}) = \{(2k + 2, 0): k\in \Z_{\geq 0}\}$.

\hfill $\square$

\bibliographystyle{abbrv}
\bibliography{geography} 

\end{document}